\title[Computations of Spin-Sp(4), Spin-SU(8), and Spin-Spin(16) bordism groups]{Computations of Spin-Sp(4), Spin-SU(8), and Spin-Spin(16) bordism groups in dimensions up to 7}
\author{Naoki Kuroda}
\subjclass[2020]{57R90}
\keywords{Spin-$G$ structures, cobordism group}
\address{GRADUATE SCHOOL OF MATHEMATICAL SCIENCES, THE
UNIVERSITY OF TOKYO, 3-8-1 KOMABA, MEGURO-KU, TOKYO, 153-8914,
JAPAN}
\email{kuronao0402@g.ecc.u-tokyo.ac.jp}
\pgfplotsset{compat=1.15}
\newtheorem{thm}{Theorem}[section]
\newtheorem{prop}[thm]{Proposition}
\newtheorem{lemma}[thm]{Lemma}
\newtheorem*{mthm*}{Main Theorem}
\newtheorem*{thm*}{Theorem}
\newtheorem*{prop*}{Proposition}
\theoremstyle{definition}
\newtheorem{remark}[thm]{Remark}
\newcommand{\Z}{{\mathbb Z}}
\newcommand{\Spin}{\mathrm{Spin}}
\newcommand{\pt}{\mathrm{pt}}
\newcommand{\CP}{\mathbb{CP}}
\newcommand{\HP}{\mathbb{HP}}
\begin{document}
\begin{abstract}
We investigate the structure of Spin-$G$ bordism groups, focusing on the interplay between Spin and additional twisting symmetries such as \( Sp(4) \), \( SU(8) \) and $Spin(16)$. Using techniques from spectral sequences, obstruction theory, and cohomology operations, we compute explicit generators for the Spin-$G$ bordism groups in dimensions up to 7.
\end{abstract}
\maketitle
\section{Introduction}

\subsection{\texorpdfstring{The relation between Spin-$G$ bordism and quantum theory}{The relation between Spin-G bordism and quantum theory}}

Bordism theories provide a foundational framework in algebraic topology, linking geometric, algebraic, and homotopical structures. Among these, spin bordism plays a crucial role in both pure mathematics and theoretical physics. In string theory, spin structures are essential for defining consistent backgrounds for fermionic fields \cite{Witten1985}. Twisted spin bordism groups further refine this framework via added additional symmetry constraints that naturally emerge in anomaly cancellation mechanisms \cite{FreedHopkins2016}.

The twisted spin bordism groups \( \Omega_*^{\Spin\text{-}G} \), where \( G \) is a Lie group (possibly discrete) and we follow the notation of Debray-Yu \cite{DY24}, extend the standard spin bordism groups by encoding supplementary symmetries that frequently arise in the field of string theory. Twisted spin bordism thus provides a powerful framework for analyzing the existence of anomalies in string theory, as discussed in \cite{DDHM23}.

\subsection{\texorpdfstring{Overview of Spin-$G$ bordism groups}{Overview of Spin-G bordism groups}}\label{subsec:Spin-G}

The notion of twisted spin bordism groups or Spin-\( G \) bordism groups \( \Omega_{\ast}^{\Spin\text{-}G} \) generalizes spin bordism by incorporating additional symmetry constraints. Let \( G \) and \( H \) be Lie groups, and let there be a double cover \( \{\pm 1\} \to G \to H \). The tangential structure is induced by the following sequence:
\begin{equation}\label{eq:Spin-G}
    \Spin\text{-}G := \Spin \times_{\{\pm 1\}} G \to SO \times H \to SO \to O,
\end{equation}
is referred to as a \( \Spin\text{-}G \) structure. 

In this paper, we consider the cases \[ (G, H) = (Sp(4), Sp(4)/\mathbb{Z}_2), 
 (SU(8), SU(8)/\mathbb{Z}_2) , 
 (Spin(16), Ss(16)) .
\]

In theoretical physics, Spin-\( G \) bordism groups are particularly relevant in string theory, where they appear in the classification of consistent backgrounds and in the study of anomalies. For instance, the cancellation of global anomalies in certain string theories requires an understanding of the bordism groups associated with specific symmetry constraints. 
In recent years, a program to compute the Spin-$G$ bordism group for Lie groups $G$ that appear as symmetries in string theory and supergravity theory has been progressing. In \cite{DDHM23}, the Spin-$G$ bordism group is computed for $(G, H) = ( Mp(2, \mathbb{Z}), SL(2, \Z))$ and $(GL^{+}(2, \mathbb{Z}), GL(2, \Z))$.

The computation of Spin-\( G \) bordism groups often involves advanced tools such as spectral sequences, obstruction theory, and cohomology operations. These techniques enable explicit calculations of low-dimensional groups and the identification of their generators. In this paper, we determine the structure of \( \Omega_{\ast \leq 7}^{\Spin\text{-}Sp(4)} \), \( \Omega_{\ast \leq 7}^{\Spin\text{-}SU(8)} \), and \( \Omega_{\ast \leq 7}^{\Spin\text{-}Spin(16)} \) as \( \mathbb{Z} \)-modules and explicitly describe the Spin-$G$ manifolds generating each module.

Spin-\( G \) bordism theory thus serves as a bridge between topology and physical applications, offering profound insights into the interplay between geometry, symmetry, and physics.

\subsection{\texorpdfstring{Introducing three Spin-$G$ bordism groups}{Introducing three Spin-G bordism groups}}

In the following, \( \mathbb{Z}_2 \) denotes the cyclic group of order \( 2 \). In \cite{DY24}, the computation of the \( \Spin\text{-}SU(8) \) bordism group was carried out based on the observation that the symmetry of 4d \( \mathcal{N} = 8 \) supergravity is given by \( E_{7(7)} \), which denotes the split real form of the exceptional Lie group, and that the maximal compact subgroup of \( E_{7(7)} \) is \( SU(8)/\mathbb{Z}_2 \). 
In Appendix B of Polchinski \cite{Po98}, a table summarizing the symmetries of low-energy supergravity theories states that the symmetries of the supergravity theories of 5d \( \mathcal{N} = 7 \), 4d \( \mathcal{N} = 8 \), and 3d \( \mathcal{N} = 9 \) are \( E_{6(6)} \), \( E_{7(7)} \), and \( E_{8(8)} \), respectively. Furthermore, the maximal compact subgroups of \( E_{6(6)} \), \( E_{7(7)} \), and \( E_{8(8)} \) are \( Sp(4)/\mathbb{Z}_2 \), \( SU(8)/\mathbb{Z}_2 \), and \( SemiSpin(16) \), respectively \cite{MCS}. 
Motivated by these observations and inspired by Debray-Yu \cite{DY24}, we aim to compute the \( \Spin\text{-}Sp(4) \), \( \Spin\text{-}SU(8) \), and \( \Spin\text{-}Spin(16) \) bordism groups.

Our main results are summarized in the following theorem.

\begin{thm}\label{thm:generator}
The three Spin-$G$ bordism groups and their respective generators in dimensions up to \( 7 \) are as follows:

\begin{table}[htbp]
\centering
\begin{tabular}{c c c}
\toprule
\toprule
$k$ & $\Omega_k^{\Spin\text{-}Sp(4)}$ & Generators \\
\midrule
$0$ & $\Z$ & $\pt$  \\
$1$ & $0$ &  \\
$2$ & $0$ &  \\
$3$ & $0$ &  \\
$4$ & $\Z\oplus \Z$ & $(\HP^1, \CP^2)$ \\
$5$ & $\Z_2\oplus \Z_2$ & $(\HP^1\times S^1, SU(3)/SO(3))$ \\
$6$ & $\Z_2\oplus \Z_2$ & $(\HP^1\times S^1\times S^1, \CP^2\times \CP^1)$ \\
$7$ & $0$ &  \\
\bottomrule
\end{tabular}
\label{tab:twistedSp}
\end{table}

\begin{table}[htbp]
\centering
\begin{tabular}{c c c}
\toprule
\toprule
$k$ & $\Omega_k^{\Spin\text{-}SU(8)}$ & Generators \\
\midrule
$0$ & $\Z$ & $\pt$  \\
$1$ & $0$ &  \\
$2$ & $0$ &  \\
$3$ & $0$ &  \\
$4$ & $\Z\oplus \Z$ & $(\HP^1, \CP^2)$ \\
$5$ & $\Z_2$ & $SU(3)/SO(3)$ \\
$6$ & $\Z\oplus \Z_2$ & $(\CP^1\times \CP^1\times \CP^1, \CP^2\times \CP^1)$ \\
$7$ & $0$ &  \\
\bottomrule
\end{tabular}
\label{tab:twistedSU}
\end{table}

\begin{table}[htbp]
\centering
\begin{tabular}{c c c}
\toprule
\toprule
$k$ & $\Omega_k^{\Spin\text{-}Spin(16)}$ & Generators \\
\midrule
$0$ & $\Z$ & $\pt$  \\
$1$ & $0$ &  \\
$2$ & $0$ &  \\
$3$ & $0$ &  \\
$4$ & $\Z\oplus \Z$ & $(\HP^1, \CP^2)$ \\
$5$ & $\Z_2$ & $SU(3)/SO(3)$ \\
$6$ & $\Z_2$ & $\CP^2\times \CP^1$ \\
$7$ & $0$ &  \\
\bottomrule
\end{tabular}
\label{tab:twistedSpin}
\caption{The specific Spin-$G$ structures of the manifolds mentioned here will be discussed in Section 4.}
\end{table}
\end{thm}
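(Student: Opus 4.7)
The plan is to translate each bordism problem to a stable-homotopy computation and run the Adams spectral sequence at the prime $2$ in the range $*\leq 7$, handling odd primes via the (classical) vanishing of the odd-primary part of $\Omega_*^{\Spin}$ in these dimensions.

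First I would identify $\Omega_*^{\Spin\text{-}G}$ with the homotopy of a twisted Thom spectrum. Writing $H=G/\{\pm 1\}$ and letting $w\in H^2(BH;\mathbb{Z}_2)$ classify the double cover $BG\to BH$, the tangential structure \eqref{eq:Spin-G} gives
$$\Omega_n^{\Spin\text{-}G}\cong\pi_n\bigl(MSpin\wedge (BH)^{\xi}\bigr),$$
where $\xi$ is the rank-zero virtual bundle over $BH$ with $w_2(\xi)=w$, twisting the spin orientation. By the Anderson--Brown--Peterson theorem, the $2$-local summands of $MSpin$ other than $ko$ begin in degree $8$, so throughout the range $*\leq 7$ one may replace the above smash product by $ko\wedge (BH)^{\xi}$.

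Second, I would run the Adams spectral sequence
$$E_2^{s,t}=\mathrm{Ext}_{\mathcal{A}(1)}^{s,t}\bigl(H^*((BH)^{\xi};\mathbb{F}_2),\mathbb{F}_2\bigr)\Longrightarrow ko_{t-s}((BH)^{\xi})^{\wedge}_2$$
for each of $H\in\{Sp(4)/\mathbb{Z}_2,\ SU(8)/\mathbb{Z}_2,\ Ss(16)\}$. The crucial input is $H^*(BH;\mathbb{F}_2)$ in degrees $\leq 7$, together with its Steenrod action, which I would obtain from the Serre spectral sequence of the fibration $B\mathbb{Z}_2\to BG\to BH$ combined with the known cohomology rings of $BSp(4)$, $BSU(8)$, and $BSpin(16)$. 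After Thomifying (a degree-$2$ shift encoded by $\xi$), each resulting low-degree cohomology module decomposes into standard $\mathcal{A}(1)$-modules whose Ext groups are tabulated, from which I would read off the $E_\infty$-page through $t-s=7$. Sparseness in this range severely limits both differentials and hidden extensions, following the template of the calculations in \cite{DY24}.

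Third, I would realize each algebraically detected class by a concrete Spin-$G$ manifold and verify non-triviality via characteristic numbers. The quaternionic line $\HP^1$ carries a tautological Spin-$Sp(4)$ structure, which lifts through $Sp(4)\hookrightarrow SU(8)\hookrightarrow Spin(16)$; meanwhile $\CP^2$ is given a Spin-$G$ structure by choosing an $H$-bundle whose $w_2$ matches $w_2(T\CP^2)$, and pairing Pontryagin numbers with the twisting bundle's characteristic classes distinguishes the two $\Z$-summands in dimension $4$. The $\Z_2$ generators in dimensions $5$ and $6$ are detected by twisted KO-invariants and $\eta$-type invariants, with $SU(3)/SO(3)$ playing its standard role as the mod-$2$ generator transported through $B\mathbb{Z}_2\to BH$, and products with $S^1$ or $\CP^1$ factors picking out the remaining classes. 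The main obstacle will be systematically distinguishing the three groups: the same geometric manifold can represent different bordism classes—or be null-bordant—depending on the ambient $H$, so the additional $\Z_2$ in $\Omega_6^{\Spin\text{-}Sp(4)}$, the appearance of a free $\Z$-summand generated by $\CP^1\times\CP^1\times\CP^1$ in $\Omega_6^{\Spin\text{-}SU(8)}$, and the collapse to a single $\Z_2$ in $\Omega_6^{\Spin\text{-}Spin(16)}$ must each be tracked by a characteristic-number computation tailored to $H$. A secondary difficulty is determining the $\mathcal{A}(1)$-module structure of $H^*(BSs(16);\mathbb{F}_2)$, since the cohomology of semispin classifying spaces is more intricate than that of $BSpin$ or $BSO$.
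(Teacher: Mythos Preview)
Your overall strategy matches the paper's: reduce to $ko$-homology via Anderson--Brown--Peterson in the range $\ast\le 7$, compute the $\mathcal{A}(1)$-module structure on the twisted cohomology of $BH$, run the Adams spectral sequence, and realize generators by manifolds detected via characteristic numbers. However, several of your technical inputs are wrong or incomplete and would derail the computation as stated.

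First, the Serre fibration you invoke is backwards: to compute $H^*(BH;\mathbb{F}_2)$ from the known $H^*(BG;\mathbb{F}_2)$ one uses $BG\to BH\to B^2\mathbb{Z}_2$ (with $BG$ as fiber), not $B\mathbb{Z}_2\to BG\to BH$. Second, your odd-prime argument is insufficient: vanishing of odd torsion in $\Omega_*^{\Spin}(\mathrm{pt})$ does not rule out odd torsion in $\Omega_*^{\Spin}(X)$; the paper instead uses that $H^*(BH;\mathbb{Z})$ has no odd torsion and is concentrated in even degrees. Third, you assume a virtual bundle $\xi$ over $BH$ with $w_2(\xi)=w$, but in all three cases no honest vector bundle with this $w_2$ exists; the paper must invoke the fake-vector-bundle formalism of Debray--Yu \cite{DY23} to justify writing down the twisted Thom spectrum at all. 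Finally, you have the difficulties inverted: $H^*(BSs(16);\mathbb{F}_2)$ with its Steenrod action is already in the literature, whereas the genuinely hard step is $H^*(B(Sp(4)/\mathbb{Z}_2);\mathbb{F}_2)$---specifically the lemma $Sq^2(y_4)=x_2y_4$, which the paper proves by a delicate comparison through $B\bigl((Sp(2)\times Sp(2))/\{\pm(I,I)\}\bigr)$ and $BSO(5)\times BSO(5)$. That single relation is what makes the $\mathcal{A}(1)$-module at $Uy_4$ differ across the three cases (an isolated cell, a $Sq^2$-joined pair, and a full $M_0$), and hence is precisely what produces the different answers in degrees $5$ and $6$.
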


\section*{Acknowledgements}
The author thanks Takuya Sakasai for his support and helpful comments. This research was supported by FMSP, WINGS Program, the University of Tokyo.

\section{Mathematical Background}

\subsection{The Leray-Serre spectral sequence}

The Leray-Serre spectral sequence is a powerful computational tool in algebraic topology, particularly in the study of fibrations. Given a ring $R$ and a fibration
\begin{equation*}
    F \to E \to B,
\end{equation*}
where \( F \) is the fiber, \( E \) is the total space, and \( B \) is the base space, the spectral sequence provides a bridge between the cohomology of \( F \), \( B \), and \( E \).

The \( E_2 \)-page of the spectral sequence is given by
\begin{equation}
    E_2^{p,q} = H^p(B; H^q(F; R)),
\end{equation}
where \( H^q(F; R) \) denotes the cohomology of the fiber \( F \) with coefficients in $R$, viewed as a local coefficient system over \( B \). The spectral sequence converges to the cohomology of the total space \( E \), that is,
\begin{equation}
    E_\infty^{p,q} \implies H^{p+q}(E; R).
\end{equation}

This spectral sequence is particularly useful for computing cohomology when the fibration has a simple or well-understood fiber \( F \) and base \( B \). For example, it has been extensively applied to the study of homogeneous spaces, classifying spaces, and loop spaces (see, e.g., \cite{Mc01, Ha02}).

The differentials \( d_r: E_r^{p,q} \to E_r^{p+r,q-r+1} \) encode higher-order interactions between the cohomology of the fiber and the base. These differentials are often determined by additional geometric or algebraic structures associated with the fibration.

For a detailed exposition of the Leray-Serre spectral sequence, we refer the reader to McCleary's book on spectral sequences \cite{Mc01} and Hatcher's foundational text on algebraic topology \cite{Ha02}.

\subsection{The Adams spectral sequence}\label{sec:ASS}

The Adams spectral sequence (ASS) is a fundamental computational tool in stable homotopy theory. It provides a systematic approach to understanding stable homotopy groups of spheres and other spectra by connecting algebraic and topological invariants.

The classical Adams spectral sequence arises from a filtration of spectra based on their connectivity. For a spectrum \( X \), the spectral sequence is given by
\begin{equation}\label{eq:ASS}
    E_2^{s,t} = \mathrm{Ext}_{\mathcal{A}}^{s,t}(H^\ast(X; \mathbb{Z}_p), \mathbb{Z}_p) \implies \pi_{t-s}(X)_{\widehat{p}},
\end{equation}
where \( \mathcal{A} \) is the mod \( p \) Steenrod algebra, \( H^\ast(X; \mathbb{Z}_p) \) denotes the mod \( p \) cohomology of \( X \), and \( \pi_\ast(X)_{\widehat{p}} \) represents the \( p \)-local stable homotopy groups of \( X \). The \( E_2 \)-page is computed using the cohomology of the Steenrod algebra, and the spectral sequence converges under suitable conditions. For a foundational introduction, we refer to Adams' seminal text \cite{Adams1974} and Ravenel's comprehensive treatment \cite{Ravenel1986}.

In the following, we describe a method to compute the 2-torsion part of Spin bordism groups using the Adams spectral sequence. By employing the Anderson-Brown-Peterson decomposition \cite{ABP67}, the Spin bordism group is expressed in terms of connective \( ko \)-homology as follows:
\begin{equation}
\label{ABP}
    \Omega_n^{\Spin}(X)_{\widehat{2}} = ko_n(X)_{\widehat{2}} \, \oplus \,  ko_{n-8}(X)_{\widehat{2}} \, \oplus \,  ko_{n-10}\langle 2 \rangle(X)_{\widehat{2}} \oplus \,\dots\,.
\end{equation}

For connective \( ko \)-homology, the following relation holds:
\begin{equation}
    H^{*}(ko ; \mathbb{Z}_2) \cong \mathcal{A} \otimes_{\mathcal{A}_1} \mathbb{Z}_2\,.
\end{equation}
Using this result, by performing a change of coefficient rings in the Adams spectral sequence \eqref{eq:ASS}, we obtain:
\begin{equation}
    E_{2}^{s,t} = \text{Ext}^{s,t}_{\mathcal{A}_1}(H^*(X; \mathbb{Z}_2), \mathbb{Z}_2) \Rightarrow ko_{t-s}(X)_{\widehat{2}}\,.
\end{equation}
This indicates that studying the structure of the \( \mathcal{A}_1 \)-action on \( H^*(X; \mathbb{Z}_2) \) provides a foothold for computing \( \Omega_n^{\Spin}(X)_{\widehat{2}} \).

For a more detailed discussion of the theoretical background and specific examples, we refer the reader to Beaudry-Campbell \cite[Section 4]{BC18}, Debray et al. \cite[Sections 10 and 11]{DDHM23}, and Kneißl \cite[Section 2.1]{Kn24}.

\subsection{\texorpdfstring{The Spin-$G$ structure and methods for computing Spin-$G$ bordism groups}{The Spin-G structure and methods for computing Spin-G bordism groups}}

In Section \ref{subsec:Spin-G}, we introduced the concept of Spin-$G$ structure. Here, we provide a more detailed explanation.

Let \( G \) and \( H \) be Lie groups, and let there be a double cover \( \{\pm 1\} \to G \to H \). The tangential structure induced by \eqref{eq:Spin-G} is called a \(\Spin\text{-}G\) structure.
For example, when $G = H = S^1$, this corresponds to the \(\Spin^c\) structure.

In this paper, we provide a detailed explanation of the three Spin-$G$ bordism groups under consideration. First, the symplectic group \( Sp(n) \) is defined as 
\begin{equation}
    Sp(n) = U(n, \mathbb{H}) = \{A \in M(n, \mathbb{H}) \mid AA^{\ast} = I\}.
\end{equation}
The center of this Lie group, \( Z(Sp(n)) \), is \( \{\pm I\} \), and the group obtained by quotienting out this center is denoted in this paper by \( Sp(n)/\Z_2 \).

Next, \( SU(2n) \) contains \( -I \), and the group obtained by quotienting \( SU(2n) \) by \( \{\pm I\} \) is denoted by \( SU(2n)/\Z_2 \) in this paper.

Finally, it is known that the universal cover \( Spin(4n) \) of \( SO(4n) \) has a center \( Z(Spin(4n)) \) isomorphic to \( \Z_2 \oplus \Z_2 \). Among the three subgroups of \( Z(Spin(4n)) \) isomorphic to \( \Z_2 \), we consider the two that are not the kernel of the covering map \( Spin(4n) \to SO(4n) \). The Lie groups obtained by quotienting \( Spin(4n) \) by these subgroups are isomorphic to each other. They are called the Semi-Spin group and we denote them by \( Ss(4n)=SemiSpin(4n) \) \cite[Theorem 4.4 and Theorem 4.5]{MT91}

From these considerations, we obtain the following three double covers of Lie groups:
\begin{equation}\label{Sp(4)fibration}
    \{\pm 1\} \to Sp(4) \to Sp(4)/\Z_2,
\end{equation}
\begin{equation}\label{SU(8)fibration}
    \{\pm 1\} \to SU(8) \to SU(8)/\Z_2,
\end{equation}
\begin{equation}\label{Spin(16)fibration}
    \{\pm 1\} \to Spin(16) \to Ss(16).
\end{equation}

The corresponding \( \Spin\text{-}G \) structures for these double covers are denoted in this paper by \( \Spin\text{-}Sp(4) \) structure, \( \Spin\text{-}SU(8) \) structure, and \( \Spin\text{-}Spin(16) \) structure, respectively.

When a manifold \( M \) is equipped with a \(\Spin\text{-}G\) structure, the above map provides the orientation structure \( M \to BSO \) and a map \( f_M: M \to BH \) via the composition \( M \to B(\Spin\text{-}G) \to BSO \times BH \). Conversely, given an oriented manifold \( M \) and a map $ f_M: M \to BH $, the obstruction to inducing a \(\Spin\text{-}G\) structure is given by \( w_2(M) + f_M^{\ast} \zeta \). Here, \( \zeta \) is the element of \( H^2(BH; \mathbb{Z}_2) \) that corresponds to the central extension \( \{\pm 1\} \to G \to H \), as described in \cite[Lemma 3.9]{DY24}.
Moreover, if \( M \) is simply connected, the lift is uniquely determined, so there is a one-to-one correspondence between the homotopy class of \( f_M: M \to BH \) satisfying \( w_2(M) = f_M^{\ast} \zeta \) and the \(\Spin\text{-}G\) structure on \( M \).

Using the Pontrjagin-Thom construction, we obtain 
\begin{equation}
    \Omega_k^{\Spin\text{-}G} \cong \pi_k (MT(\Spin\text{-}G)).
\end{equation}
Here, if a vector bundle $V \to BH$ of rank $r_V$ satisfies $w_2(V) = \zeta$, the Madsen-Tillmann spectrum $MT(\Spin\text{-}G)$ is decomposed using the Thom space of the virtual vector bundle $V - r_V$ as 
\begin{equation}
    MT(\Spin\text{-}G) \sim MTSpin \wedge BH^{V - r_V}
\end{equation}
(see \cite[Corollary 3.4]{DDHM23}). From this, we rewrite 
\begin{equation}
    \Omega_k^{\Spin\text{-}G} \cong \pi_k(MT(\Spin\text{-}G)) \cong \pi_k(MTSpin \wedge BH^{V - r_V}) \cong \Omega_k^{\Spin}(BH^{V - r_V}).
\end{equation}

Thus, by using the previously described method for computing $\Omega_k^{\Spin}(X)$, we can compute the $\Spin\text{-}G$ bordism group.

In this calculation, we assume the existence of a vector bundle $V$ over $BH$ satisfying $w_2(V) = \zeta$. However, there are cases where such a $V$ does not exist. Examples of these are discussed in \cite{DY23}, and \( (G, H) = (Sp(4), Sp(4)/\mathbb{Z}_2) \), 
\( (SU(8), SU(8)/\mathbb{Z}_2) \), and 
\( (Spin(16), Ss(16)) \) are such cases. For these examples, it was previously thought that the above method could not be applied. However, Debray and Yu \cite[Theorem (2)]{DY23} show that the computation remains valid and can be carried out in the same way, even in the absence of this condition. 

In this paper, we adopt the computation method based on that result.

\section{The Calculation of the three bordism groups in dimensions up to 7}
\label{sec:calculation_bordism}

\subsection{The relations among the three bordism groups}

We begin by establishing the connections among the three $\Spin\text{-}G$ bordism groups that form the central theme of this paper. The symplectic group \( Sp(n) \) is naturally regarded as a subgroup of \( U(2n) = U(2n, \mathbb{C}) \) in the following manner. 
By interpreting \( \mathbb{H}^n \) as \( \mathbb{C}^{2n} \), an isometry of \( \mathbb{H}^n \) is viewed as an isometry of \( \mathbb{C}^{2n} \). This provides a natural inclusion map \( Sp(n) \to U(2n) \), whose image lies in \( SU(2n) \subset U(2n) \) \cite{MT91}.

Similarly, there exists a natural inclusion \( U(2n) \to SO(4n) \). Since \( SU(2n) \) is simply connected, the composition \( SU(2n) \to U(2n) \to SO(4n) \) lifts to the universal cover \( Spin(4n) \) of \( SO(4n) \). This yields an inclusion map \( g: SU(2n) \to Spin(4n) \), as shown in the commutative diagram below:

\begin{equation}
    \begin{tikzcd}
 & SU(2n) \arrow[r, dashed, "g"] \arrow[d, hook, "s"] & Spin(4n) \arrow[d, two heads, "t"] \\
Sp(n) \arrow[ru, hook, dashed, "f"] \arrow[r, hook, "f'"] & U(2n) \arrow[r, hook, "g'"] & SO(4n)
\end{tikzcd}
\end{equation}

Next, we examine the image of \( -I \in Sp(n) \) under the maps \( f \) and \( g \). Since \( -I \) maps to \( -I \) through the sequence \( Sp(n) \to U(2n) \to SO(4n) \), we have \( f(-I) = -I \), and \( g(-I) \) corresponds to the lift of \( -I \). By quotienting out \( -I \) in each case, we obtain the following sequence of Lie group homomorphisms:

\begin{equation}
    \begin{tikzcd}
Sp(n)/\mathbb{Z}_2 \arrow[r, "\widetilde{f}"] & SU(2n)/\mathbb{Z}_2 \arrow[r, "\widetilde{g}"] & Ss(4n)
\end{tikzcd}
\end{equation}

These homomorphisms allow us to construct maps between the associated fibrations:

\begin{equation}
    \begin{tikzcd}
Sp(4) \arrow[r] \arrow[d, "f"'] & Sp(4)/\mathbb{Z}_2 \arrow[r] \arrow[d, "\widetilde{f}"'] & B\mathbb{Z}_2 \arrow[d, "id"] \\
SU(8) \arrow[r] \arrow[d, "g"'] & SU(8)/\mathbb{Z}_2 \arrow[r] \arrow[d, "\widetilde{g}"'] & B\mathbb{Z}_2 \arrow[d, "id"] \\
Spin(16) \arrow[r] & Ss(16) \arrow[r] & B\mathbb{Z}_2
\end{tikzcd}
\end{equation}

By the naturality of these maps, we obtain homomorphisms between the bordism groups:

\begin{equation}\label{maps between three bordism groups}
    \Omega_k^{\Spin\text{-}Sp(4)} \to \Omega_k^{\Spin\text{-}SU(8)} \to \Omega_k^{\Spin\text{-}Spin(16)}.
\end{equation}

\begin{prop}\label{prop:5-equivalence}
The maps \eqref{maps between three bordism groups} are isomorphisms for \( k \leq 4 \).
\end{prop}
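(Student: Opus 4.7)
The plan is to establish that the induced maps of Madsen--Tillmann spectra $MT(\Spin\text{-}Sp(4)) \to MT(\Spin\text{-}SU(8)) \to MT(\Spin\text{-}Spin(16))$ are $5$-equivalences, which via $\Omega_k^{\Spin\text{-}G} \cong \pi_k(MT(\Spin\text{-}G))$ will yield the desired isomorphisms in degrees $k \leq 4$. The argument unfolds in three stages: first a $5$-equivalence of the quotient classifying spaces $B(G/\Z_2)$, then its lift to the tangential classifying spaces $B(\Spin\text{-}G)$, and finally its transfer to the Thom spectra.

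First I would compute the low-dimensional homotopy groups of $B(G/\Z_2)$. For any simply connected compact Lie group $G$ with central $\Z_2$, the covering $\{\pm 1\} \to G \to G/\Z_2$ gives $\pi_1(G/\Z_2) = \Z_2$ and $\pi_k(G/\Z_2) \cong \pi_k(G)$ for $k \geq 2$. Inserting the classical values $\pi_2(G) = 0$ and $\pi_3(G) = \Z$ for $G \in \{Sp(4), SU(8), Spin(16)\}$, together with $\pi_4(Sp(4)) = \Z_2$ and $\pi_4(SU(8)) = \pi_4(Spin(16)) = 0$, one finds $\pi_k(B(G/\Z_2)) = (0, 0, \Z_2, 0, \Z)$ in degrees $k = 0, \ldots, 4$ independently of $G$. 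The generator of $\pi_3$ is represented in each case by the embedding $Sp(1) = SU(2) = Spin(3) \hookrightarrow G$, and the commutative diagrams of Section~3.1 make $\widetilde f$ and $\widetilde g$ match these generators; hence $B\widetilde f$ and $B\widetilde g$ are isomorphisms on $\pi_k$ for $k \leq 4$ and (trivially) surjections on $\pi_5$, i.e.\ they are $5$-equivalences of spaces.

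Next I would lift this to $B(\Spin\text{-}G)$. From the central extension $\{\pm 1\} \to \Spin\text{-}G \to SO \times (G/\Z_2)$ one has a fiber sequence $B\Z_2 \to B(\Spin\text{-}G) \to BSO \times B(G/\Z_2)$; naturality under $\widetilde f$ and $\widetilde g$ gives compatible maps of fibrations, and the $5$-lemma applied to the long exact sequence in $\pi_*$ converts the $5$-equivalence on bases from the previous step into a $5$-equivalence of total spaces. Taking Thom spectra of the tautological stable normal bundles then yields $5$-equivalences of Madsen--Tillmann spectra, since Thom-spectrum formation preserves $n$-equivalences of base spaces when the bundles are compatibly pulled back. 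Finally, a $5$-equivalence of spectra induces isomorphisms on $\pi_k$ for $k \leq 4$ --- its cofiber is $5$-connected, so its $\pi_{\leq 5}$ vanishes, and the long exact sequence gives the claim.

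The step I expect to be the main obstacle is the passage to Madsen--Tillmann spectra above: as the paper notes in Section~2.3, no vector bundle $V \to B(G/\Z_2)$ with $w_2(V) = \zeta$ exists in any of the three cases under consideration, so the convenient decomposition $MT(\Spin\text{-}G) \simeq MTSpin \wedge BH^{V - r_V}$ is not directly available. One must therefore work with the Madsen--Tillmann spectrum as $B(\Spin\text{-}G)^{-\gamma}$ itself, using the Debray--Yu framework cited there to justify that the standard Thom-spectrum manipulations still go through. Once the conventions for the tangential structure are fixed, the rest of the argument is essentially formal and reduces to the homotopy-group computation above.
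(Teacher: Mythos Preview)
Your strategy is essentially the paper's: show the underlying maps of groups/classifying spaces are $4$- (resp.\ $5$-)equivalences by a low-degree homotopy-group check, push this through a fibration with $B\Z_2$ to the $\Spin\text{-}G$ level, and then pass to Thom spectra. The paper uses the fibration $\Spin\times G \to \Spin\text{-}G \to B\Z_2$ where you use $B\Z_2 \to B(\Spin\text{-}G) \to BSO\times B(G/\Z_2)$, but these give the same conclusion.

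Two comments. First, your verification that $f_*,g_*$ are isomorphisms on $\pi_3$ is the one substantive step, and the sentence ``the generator of $\pi_3$ is the embedding $Sp(1)=SU(2)=Spin(3)\hookrightarrow G$ and the diagrams match these'' is not quite enough as written. The composite $SU(2)\hookrightarrow SU(8)\xrightarrow{g} Spin(16)$ lands in $Spin(16)$ via $SU(2)\to SO(4)\hookrightarrow SO(16)$ (complexification-to-real on $\C^2$), which is \emph{not} the standard $Spin(3)\hookrightarrow Spin(16)$; that it nonetheless hits a generator of $\pi_3$ requires an argument. The paper supplies this by computing the pullbacks of characteristic classes: $(Bg)^*q_1=-c_2$ and $(Bf)^*c_2=p_1$, so $(Bf)^*$ and $(Bg)^*$ carry generators of $H^4(-;\Z)$ to generators, whence Hurewicz gives the $\pi_3$ isomorphisms. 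You should insert this computation (or an equivalent one).

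Second, your worry about the Thom-spectrum step is misplaced. The Madsen--Tillmann spectrum $MT(\Spin\text{-}G)$ is by definition the Thom spectrum of $-\gamma$ over $B(\Spin\text{-}G)$, and this exists and is functorial regardless of whether a bundle $V\to BH$ with $w_2(V)=\zeta$ exists; a $5$-equivalence $B(\Spin\text{-}G_1)\to B(\Spin\text{-}G_2)$ covered by the pulled-back $\gamma$'s gives a $5$-equivalence of Thom spectra by the Thom isomorphism. The non-existence of $V$ only obstructs the splitting $MT(\Spin\text{-}G)\simeq MTSpin\wedge BH^{V-r_V}$ used later for the Adams computation, not the present argument.
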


\begin{proof}
First, we show that \( f \) and \( g \) are 4-equivalences. By standard homotopy group calculations, we know:

\[
\pi_0(Sp(4)) \cong \pi_0(SU(8)) \cong \pi_0(Spin(16)) \cong \mathbb{Z},
\]
\[
\pi_1(Sp(4)) \cong \pi_1(SU(8)) \cong \pi_1(Spin(16)) \cong 0,
\]
\[
\pi_2(Sp(4)) \cong \pi_2(SU(8)) \cong \pi_2(Spin(16)) \cong 0,
\]
\[
\pi_3(Sp(4)) \cong \pi_3(SU(8)) \cong \pi_3(Spin(16)) \cong \Z,
\]
\[
\pi_4(SU(8)) \cong \pi_4(Spin(16)) \cong 0.
\]
It suffices to show that the induced maps
\[
f_{\ast}: \pi_3(Sp(4)) \to \pi_3(SU(8)) \quad \text{and} \quad g_{\ast}: \pi_3(SU(8)) \to \pi_3(Spin(16))
\]
are isomorphisms. Using the properties of classifying spaces and the Hurewitz theorem, this reduces to showing that
\[
(Bf)^{\ast}: H^4(BSU(8); \mathbb{Z}) \to H^4(BSp(4); \mathbb{Z})
\]
and
\[
(Bg)^{\ast}: H^4(BSpin(16); \mathbb{Z}) \to H^4(BSU(8); \mathbb{Z})
\]
are isomorphisms. 

\begin{equation}
    \begin{tikzcd}
 & BSU(8) \arrow[r, " Bg"] \arrow[d, " Bs"] & BSpin(16) \arrow[d, "Bt"] \\
BSp(4) \arrow[ru, "Bf"] \arrow[r, "Bf'"] & BU(8) \arrow[r, "Bg'"] & BSO(16)
\end{tikzcd}
\end{equation}

In the commutative diagram above,  
\( Bt^{\ast}(p_1) = 2q_1 \) (where \( q_1 \) is the first spin class).  
Moreover, since  
\[
Bg'^{\ast}(1 - p_1 + p_2 - \ldots) = (1 + c_1 + c_2 + \ldots)(1 - c_1 + c_2 - \ldots),
\]
we have \( Bs^{\ast}(Bg'^{\ast}(p_1)) = Bs^{\ast}(c_1^2 - 2c_2 ) = -2c_2 \) and \( Bf'^{\ast}(Bg'^{\ast}(p_1)) = Bf'^{\ast}(c_1^2 - 2c_2) = -2p_1 \), as follows from the properties of characteristic classes.  

Thus, \( Bg^{\ast}(q_1) = -c_2 \) and \( Bf^{\ast}(c_2) = p_1 \), which shows that \( Bg^{\ast} \) and \( Bf^{\ast} \) map generators to generators.  
Therefore, \( f \) and \( g \) are 4-equivalence.

\begin{equation}
    \begin{tikzcd}
\Spin\times Sp(4) \arrow[r, ""] \arrow[d, "id\times f"'] & \Spin\text{-}Sp(4) \arrow[r, ""] \arrow[d, ""'] & B\Z_2 \arrow[d, "id"] \\
\Spin\times SU(8) \arrow[r, ""] \arrow[d, "id\times g"'] & \Spin\text{-}SU(8) \arrow[r, ""] \arrow[d, ""'] & B\Z_2 \arrow[d, "id"] \\
\Spin\times Spin(16) \arrow[r, ""] & \Spin\text{-}Spin(16) \arrow[r, ""] & B\Z_2
\end{tikzcd}
\end{equation}

By considering the Leray-Serre spectral sequence for the maps in the above fibration,  
\begin{equation}
    \Spin\text{-}Sp(4) \to \Spin\text{-}SU(8) \to \Spin\text{-}Spin(16)
\end{equation}
is 4-equivalence. Consequently, 
\[ 
MT(\Spin\text{-}Sp(4)) \to MT(\Spin\text{-}SU(8)) \to MT(\Spin\text{-}Spin(16)) 
\]
is (4+1) = 5-equivalence. This completes the proof.
\end{proof}

\subsection{No odd-prime torsion in the three bordism groups}

\begin{prop}
The bordism groups \( \Omega_{\ast}^{\Spin\text{-}Sp(4)} \), \( \Omega_{\ast}^{\Spin\text{-}SU(8)} \), and \( \Omega_{\ast}^{\Spin\text{-}Spin(16)} \) contain no \( p \)-torsion for odd primes \( p \).
\end{prop}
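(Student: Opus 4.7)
The plan is to translate everything to the Thom spectrum picture set up in Section~2.3 and then run the Atiyah--Hirzebruch spectral sequence one prime at a time. Using $\Omega_k^{\Spin\text{-}G} \cong \Omega_k^{\Spin}(BH^{V-r_V})$, the AHSS
\[
E^2_{s,t} = H_s(BH^{V-r_V}; \Omega_t^{\Spin}) \Rightarrow \Omega_{s+t}^{\Spin}(BH^{V-r_V})
\]
is my main tool. The strategy is to show that, at any odd prime $p$, the $E^2$-page is $\Z_{(p)}$-torsion-free and concentrated in even total degrees; this forces all differentials to vanish and rules out torsion extensions.

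The input on the coefficient side is the classical fact that $MSpin_{(p)} \simeq MSO_{(p)}$ for odd $p$, so $\Omega_t^{\Spin} \otimes \Z_{(p)}$ is torsion-free and concentrated in degrees $t \equiv 0 \pmod 4$. The input on the space side is an analysis of $H_*(BH; \Z_{(p)})$ for $H \in \{Sp(4)/\Z_2,\, SU(8)/\Z_2,\, Ss(16)\}$. Since $B\Z_2$ has no odd-primary (co)homology, the Serre spectral sequence of $B\Z_2 \to BG \to BH$ collapses $p$-locally and yields $H^*(BH; \Z_{(p)}) \cong H^*(BG; \Z_{(p)})$. For $G = Sp(4)$ and $SU(8)$ the integral cohomology is already a polynomial algebra on even-degree characteristic classes, and for $G = Spin(16)$ Borel's classification of torsion primes guarantees that $H^*(BSpin(16); \Z)$ has only $2$-primary torsion. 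Hence $H^*(BH; \Z_{(p)})$ is torsion-free and concentrated in even degrees. The Thom isomorphism for the virtual bundle $V - r_V$ (valid because any real bundle is orientable in $\Z_{(p)}$-cohomology for odd $p$) then transports these properties to $H_*(BH^{V-r_V}; \Z_{(p)})$.

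Combining the two inputs, $E^2_{s,t} \otimes \Z_{(p)}$ is nonzero only in bidegrees with $s$ even and $t \equiv 0 \pmod 4$, so in particular only in even total degree. Since each differential $d_r$ shifts total degree by $-1$, every differential lands in the zero group, and the filtration on $\Omega_k^{\Spin\text{-}G} \otimes \Z_{(p)}$ has torsion-free associated graded. As extensions of free $\Z_{(p)}$-modules are free, the abutment is $\Z_{(p)}$-torsion-free; running this at each odd prime finishes the argument.

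The main subtlety is the input step rather than the spectral-sequence bookkeeping: one must confirm that $H^*(BSs(16); \Z)$ has no odd-primary torsion (which follows from the statement for $BSpin(16)$ via the $p$-local equivalence above) and that the Thom isomorphism applies cleanly to the virtual bundle $V - r_V$. Once those are in hand the AHSS argument is entirely mechanical and no explicit computation in low dimensions is required.
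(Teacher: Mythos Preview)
Your overall strategy---localize at an odd prime, observe that both the Spin bordism coefficients and the relevant cohomology are torsion-free and concentrated in even degrees, then collapse the AHSS---is exactly the standard argument behind the paper's citations of \cite{DY24} and \cite{Ko86}. But there is a real gap in the way you set it up: you invoke the splitting $\Omega_k^{\Spin\text{-}G} \cong \Omega_k^{\Spin}(BH^{V-r_V})$ and then the Thom isomorphism for $V - r_V$, yet the paper explicitly warns at the end of Section~2.3 that for each of the three pairs $(G,H)$ under consideration \emph{no} vector bundle $V \to BH$ with $w_2(V)=\zeta$ exists. So neither the Thom spectrum $BH^{V-r_V}$ nor its Thom isomorphism is literally available, and your $E^2$-page as written is not defined.

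The fix is painless and in fact simplifies the argument. The double cover $\Spin\times G \to \Spin\text{-}G$ has kernel $\Z_2$, so the induced map on classifying spaces is a $p$-local equivalence over $BO$ for every odd prime $p$; Thomifying gives $MT(\Spin\text{-}G)_{(p)}\simeq (MT\Spin\wedge BG_+)_{(p)}$ and hence $\Omega_*^{\Spin\text{-}G}\otimes\Z_{(p)}\cong \Omega_*^{\Spin}(BG)\otimes\Z_{(p)}$, with no twist at all. Now run the ordinary AHSS over $BG$: your inputs that $H^*(BSp(4);\Z)$ and $H^*(BSU(8);\Z)$ are polynomial on even-degree classes, and that $H^*(BSpin(16);\Z)$ has only $2$-torsion with free part in even degrees, feed in directly, with no Thom isomorphism and no detour through $BH$. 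The checkerboard collapse and the torsion-free extension argument then go through exactly as you wrote them.
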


\begin{proof}
The result for \( \Omega_{\ast}^{\Spin\text{-}SU(8)} \) was proven in \cite{DY24}. The same argument applies to \( \Omega_{\ast}^{\Spin\text{-}Sp(4)} \). For \( \Omega_{\ast}^{\Spin\text{-}Spin(16)} \), the proof follows from the fact that \( H^{\ast}(BSpin(16); \mathbb{Z}) \) contains no \( p \)-torsion and has free components only in even dimensions \cite{Ko86}.
\end{proof}

\subsection{\texorpdfstring{Computation of cohomology rings with $\mathbb{Z}_2$ coefficients}{Computation of cohomology rings with Z2 coefficients}}

To perform the Adams computation, it is essential to first compute the cohomology rings and specify the Steenrod actions.

\begin{thm}\label{thm:cohomology ring}
The cohomology rings with $\mathbb{Z}_2$ coefficients for the spaces in question are given by:

\begin{align}
H^{\ast\leq 11}(B(Sp(4)/\mathbb{Z}_2); \mathbb{Z}_2) &= \mathbb{Z}_2[x_2, x_3, x_5, x_9, y_4]/((x_5 + x_2x_3)y_4), \\
H^{\ast\leq 11}(B(SU(8)/\mathbb{Z}_2); \mathbb{Z}_2) &= \mathbb{Z}_2[x_2, x_3, x_5, x_9, y_4, y_6, y_{10}]/(x_5y_4 + x_3y_6), \\
H^{\ast\leq 11}(BSs(16); \mathbb{Z}_2) = \mathbb{Z}_2&[x_2, x_3, x_5, x_9, y_4, y_6, y_7, y_{10}, y_{11}]/((x_5 + x_2x_3)y_4 + x_3y_6 + x_2y_7).
\end{align}
The Steenrod action on each element is described in the following tables.

\begin{table}[ht]
\centering
\begin{tabular}{|c|c|c|c|c|c|}
\hline
 & $Sq^1$ & $Sq^2$ & $Sq^3$ & $Sq^4$ & $Sq^5$ \\
\hline
$x_2$ & $x_3$ & ${x_2}^2$ & & & \\
\hline
$x_3$ & $0$ & $x_5$ & ${x_3}^2$ & & \\
\hline
$x_5$ & ${x_3}^2$ & $0$ & $0$ & $x_9$ & ${x_5}^2$ \\
\hline
$x_9$ & ${x_5}^2$ & $0$ & $0$ & & \\
\hline
$y_4$ & $0$ & $x_2y_4$ & $x_3y_4$ & ${y_4}^2$ & \\
\hline
\end{tabular}
\caption{Steenrod operations on elements of $H^{\ast\leq 11}(B(Sp(4)/\mathbb{Z}_2); \mathbb{Z}_2)$}
\end{table}

\begin{table}[ht]
\centering
\begin{tabular}{|c|c|c|c|c|c|c|}
\hline
 & $Sq^1$ & $Sq^2$ & $Sq^3$ & $Sq^4$ & $Sq^5$ & $Sq^6$ \\
\hline
$x_2$ & $x_3$ & ${x_2}^2$ & & & & \\
\hline
$x_3$ & $0$ & $x_5$ & ${x_3}^2$ & & & \\
\hline
$x_5$ & ${x_3}^2$ & $0$ & $0$ & $x_9$ & ${x_5}^2$ & \\
\hline
$x_9$ & ${x_5}^2$ & $0$ & $0$ & & & \\
\hline
$y_4$ & $0$ & $y_6$ & $x_3y_4$ & ${y_4}^2$ & & \\
\hline
$y_6$ & $x_3y_4$ & $0$ & $0$ & $y_{10}$ & $x_5y_6 + x_3{y_4}^2$ & ${y_6}^2$ \\
\hline
$y_{10}$ & $x_5y_6 + x_3{y_4}^2$ & ${y_6}^2$ & & & & \\
\hline
\end{tabular}
\caption{Steenrod operations on elements of $H^{\ast\leq 11}(B(SU(8)/\mathbb{Z}_2); \mathbb{Z}_2)$}
\end{table}

\begin{table}[ht]
\centering
\begin{tabular}{|c|c|c|c|c|c|c|}
\hline
 & $Sq^1$ & $Sq^2$ & $Sq^3$ & $Sq^4$ & $Sq^5$ & $Sq^6$ \\
\hline
$x_2$ & $x_3$ & ${x_2}^2$ & & & & \\
\hline
$x_3$ & $0$ & $x_5$ & ${x_3}^2$ & & & \\
\hline
$x_5$ & ${x_3}^2$ & $0$ & $0$ & $x_9$ & ${x_5}^2$ & \\
\hline
$x_9$ & ${x_5}^2$ & $0$ & $0$ & & & \\
\hline
$y_4$ & $0$ & $y_6$ & $y_7$ & ${y_4}^2$ & & \\
\hline
$y_6$ & $y_7$ & $0$ & $0$ & $y_{10}$ & $y_{11}$ & ${y_6}^2$ \\
\hline
$y_7$ & $0$ & $0$ & $0$ & $y_{11}$ & $0$ & \\
\hline
$y_{10}$ & $y_{11}$ & ${y_6}^2$ & & & & \\
\hline
$y_{11}$ & $0$ & & & & & \\
\hline
\end{tabular}
\caption{Steenrod operations on elements of $H^{\ast\leq 11}(BSs(16); \mathbb{Z}_2)$}
\end{table}
\end{thm}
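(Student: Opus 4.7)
The plan is to compute each cohomology ring using the Serre spectral sequence of the fibration
\begin{equation*}
BG \longrightarrow BH \longrightarrow K(\mathbb{Z}_2, 2)
\end{equation*}
arising from the central extension $1 \to \mathbb{Z}_2 \to G \to H \to 1$, where the map $BH \to K(\mathbb{Z}_2, 2)$ classifies the extension class $\zeta$. On the $E_2$-page we have $E_2^{p,q} = H^p(K(\mathbb{Z}_2,2);\mathbb{Z}_2) \otimes H^q(BG;\mathbb{Z}_2)$. Serre's classical computation of $H^*(K(\mathbb{Z}_2,2);\mathbb{Z}_2)$ supplies, in the range $*\leq 11$, the polynomial generators $\iota_2$, $Sq^1\iota_2$, $Sq^2 Sq^1 \iota_2$, $Sq^4 Sq^2 Sq^1 \iota_2$, which we identify with $x_2, x_3, x_5, x_9$. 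The fiber cohomology rings are standard: $\mathbb{Z}_2[q_1,\dots,q_4]$ for $BSp(4)$, $\mathbb{Z}_2[c_2,\dots,c_8]$ for $BSU(8)$, and Quillen's presentation for $BSpin(16)$.

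I would first establish the survival of the lowest-degree fiber generators. The rational cohomology $H^4(BH;\mathbb{Q}) \cong \mathbb{Q}$ in each case forces $q_1$ (respectively $c_2$, respectively the Quillen degree-$4$ class) to survive to $E_\infty$, producing $y_4$. Kudo's transgression theorem, combined with the Wu-type formulas for Steenrod squares on symplectic, Chern, and Spin classes, then propagates this survival to higher fiber generators: in the $SU(8)$ case, $y_6 = Sq^2 y_4$ lifts $c_3$ and $y_{10} = Sq^4 y_6$ lifts $c_5$ (up to decomposable corrections), and in the $Ss(16)$ case, $y_7 = Sq^3 y_4$ and $y_{11} = Sq^4 y_7$ capture the additional Quillen odd-degree contributions. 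The base-pulled classes $x_2, x_3, x_5, x_9$ survive trivially on the bottom row.

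The ring relations arise from the first non-trivial transgression on a higher fiber generator. In the $Sp(4)$ case, the identity $d_5(q_2) = (x_5 + x_2 x_3)\, q_1$ in the spectral sequence yields the $E_\infty$-relation $(x_5 + x_2 x_3)\, y_4 = 0$; the analogous transgressions on the next Chern and Quillen generators give the relations $x_5 y_4 + x_3 y_6 = 0$ and $(x_5+x_2x_3)y_4 + x_3 y_6 + x_2 y_7 = 0$ in the $SU(8)$ and $Ss(16)$ cases respectively. The precise coefficients are pinned down by Kudo's theorem applied iteratively, naturality of the maps of Proposition~\ref{prop:5-equivalence} among the three spectral sequences, and a rank check against rational cohomology in every low degree, which also rules out further differentials in the range.

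Finally, the Steenrod tables are filled in by combining the universal operations on $x_2, x_3, x_5, x_9$ inherited from $K(\mathbb{Z}_2,2)$ with fiber compatibility for the $y_i$: each $Sq^i y_j$ must restrict on the fiber to the corresponding operation on $q$-, $c$-, or Quillen's classes, and any remaining base correction is fixed by degree and the Adem relations. For example, $Sq^4 y_4 = y_4^2$ is forced because $Sq^4 q_1 = q_1^2$ on the fiber and $y_4^2$ is the unique lift in $H^8(BH)$, while in the $Sp(4)$ case $Sq^2 y_4$ restricts to $Sq^2 q_1 = 0$ and so must lie in the ideal generated by $x_2, x_3$, leaving only $x_2 y_4$ by degree. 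The main obstacle will be the precise identification of the higher transgressions — especially in the $Ss(16)$ case, where Quillen's presentation introduces additional generators in odd degrees — together with verifying that no unlisted relations appear below degree $11$; these are resolved by iterative application of Kudo's theorem, naturality across the three fibrations, and dimension counts against rational cohomology.
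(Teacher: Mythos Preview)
Your overall framework --- the Serre spectral sequence of $BG \to BH \to K(\mathbb{Z}_2,2)$ --- is the same as the paper's. The paper, however, organizes the logic differently: it takes the $BSs(16)$ case as already known from the literature, and then pulls the generators, the degree-$9$ relation, and the Steenrod actions back along the comparison maps $B\widetilde g^{\ast}$, $B\widetilde f^{\ast}$ to obtain the $SU(8)$ and $Sp(4)$ presentations. This spares one from computing transgressions such as your ``$d_5(q_2) = (x_5 + x_2 x_3)\,q_1$'' directly; note that Kudo's theorem does not yield that formula, since $q_2$ is not of the form $Sq^i q_1$.

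There is a genuine gap in your determination of $Sq^2 y_4$ in the $Sp(4)$ case. You write that ``$Sq^2 y_4$ restricts to $Sq^2 q_1 = 0$ and so must lie in the ideal generated by $x_2, x_3$, leaving only $x_2 y_4$ by degree.'' But in degree $6$ that kernel also contains $x_2^3$, $x_3^2$, and $0$; in particular the possibility $Sq^2 y_4 = 0$ is consistent with fiber restriction and with every Adem relation you could impose (for instance $Sq^2 Sq^2 = Sq^3 Sq^1$ is vacuous since $Sq^1 y_4 = 0$). Resolving this ambiguity is precisely the content of the paper's key lemma, whose proof is not formal: it uses the accidental isomorphism $Sp(2)/\mathbb{Z}_2 \cong SO(5)$, maps $B\bigl((Sp(2)\times Sp(2))/\{\pm(I,I)\}\bigr)$ into $B(Sp(4)/\mathbb{Z}_2)$ via a block-diagonal, pins down the pullback of $y_4$ as $i^{\ast}(w_4 + w_4')$ by deriving a contradiction from the alternative via the already-established degree-$9$ relation together with Wu's formula in $H^{\ast}(BSO(5);\mathbb{Z}_2)$, and only then reads off $Sq^2 y_4 = x_2 y_4$ from $Sq^2 w_4 = w_2 w_4$ and injectivity of the comparison map in this range. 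Without this (or an argument of comparable strength), the $Sp(4)$ Steenrod table --- and with it the $\mathcal{A}_1$-module input to the downstream Adams computation --- is not established.
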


\begin{proof}
The results for $BSs(16)$ are already known in \cite{Ta13}, \cite{Kn24}. For $B(Sp(4)/\mathbb{Z}_2)$ and $B(SU(8)/\mathbb{Z}_2)$, the computations will be carried out using known results and the following maps between fibrations.

\begin{equation}\label{three fibrations}
    \begin{tikzcd}
BSp(4) \arrow[r, ""] \arrow[d, "Bf"'] & B(Sp(4)/\mathbb{Z}_2) \arrow[r, ""] \arrow[d, "B\widetilde f"'] & B^2\mathbb{Z}_2 \arrow[d, "id"] \\
BSU(8) \arrow[r, ""] \arrow[d, "Bg"'] & B(SU(8)/\mathbb{Z}_2) \arrow[r, ""] \arrow[d, "B\widetilde g"'] & B^2\mathbb{Z}_2 \arrow[d, "id"] \\
BSpin(16) \arrow[r, ""] & BSs(16) \arrow[r, ""] & B^2\mathbb{Z}_2
\end{tikzcd}
\end{equation}

The cohomology rings of the spaces appearing in the left and right columns of \eqref{three fibrations}, with $\mathbb{Z}_2$ coefficients, are described in \cite{MT91} and are given as follows.

\begin{prop}
\begin{align*}
H^{\ast\leq 11}(BSp(4); \mathbb{Z}_2) &= \mathbb{Z}_2[p_1, p_2], \\
H^{\ast\leq 11}(BSU(8); \mathbb{Z}_2) &= \mathbb{Z}_2[c_2, c_3, c_4, c_5], \\
H^{\ast\leq 11}(BSpin(16); \mathbb{Z}_2) &= \mathbb{Z}_2[w_4, w_6, w_7, w_8, w_{10}, w_{11}], \\
H^{\ast\leq 11}(B^2\mathbb{Z}_2; \mathbb{Z}_2) &= \mathbb{Z}_2[\alpha, \beta, \gamma, \delta].
\end{align*}

Additionally, the following relations hold:
\[
Bf^{\ast}(c_{2i}) = p_i, \quad Bf^{\ast}(c_{2i+1}) = 0, \quad Bg^{\ast}(w_{2i}) = c_i, \quad Bg^{\ast}(w_{2i+1}) = 0.
\]

Here:
\begin{itemize}
    \item $p_i$ are the symplectic Pontryagin classes, reduced modulo $2$, and denoted by the same symbols.
    \item $c_i$ are the Chern classes, reduced modulo $2$, and denoted by the same symbols.
    \item $w_i$ are the Stiefel-Whitney classes, obtained by pulling back the map $BSpin(16) \to BSO(16)$, and denoted by the same symbols.
    \item $\alpha, \beta, \gamma, \delta$ are degree $2$, $3$, $5$, and $9$ elements, respectively.
\end{itemize}
\end{prop}

We consider the fibration 
\[
BSpin(16) \to BSs(16) \to B^2\mathbb{Z}_2.
\]
The cohomology ring \( H^{\ast\leq 11}(BSpin(16);\mathbb{Z}_2) \) is a polynomial ring generated by elements in degrees 4, 6, 7, 8, 10, and 11. Similarly, \( H^{\ast}(B^2\mathbb{Z}_2;\mathbb{Z}_2) \) is a polynomial ring generated by elements in degrees 2, 3, 5, and 9. 

From this, the following conclusions are drawn:

\begin{itemize}
    \item The elements \( \alpha \), \( \beta \), \( \gamma \), \( \delta \), \( w_4 \), \( w_6 \), \( w_7 \), \( w_{10} \), and \( w_{11} \) survive to the \( E_{\infty} \)-page of the Leray–Serre spectral sequence.
    \item The element \( w_8 \) is killed in the Leray–Serre spectral sequence by a differential map, pairing with a generator in degree 9.
\end{itemize}

We apply these results, in combination with the maps in the aforementioned fibration, to \( H^{\ast}(B(Sp(4)/\mathbb{Z}_2); \mathbb{Z}_2) \) and \( H^{\ast}(B(SU(8)/\mathbb{Z}_2); \mathbb{Z}_2) \).

First, for \( H^{\ast}(B(SU(8)/\mathbb{Z}_2); \mathbb{Z}_2) \), since \( B g^{\ast}(w_4) = c_2 \), \( B g^{\ast}(w_6) = c_3 \), and \( B g^{\ast}(w_{10}) = c_5 \), it follows by the naturality that \( c_2 \), \( c_3 \), and \( c_5 \) survive to the \( E_{\infty} \)-page of the Leray–Serre spectral sequence. Consequently, \( B \widetilde{g}^{\ast}(y_4) \), \( B \widetilde{g}^{\ast}(y_6) \), and \( B \widetilde{g}^{\ast}(y_{10}) \) map to generators of \( H^{\ast}(B(SU(8)/\mathbb{Z}_2); \mathbb{Z}_2) \).

Furthermore, since \( B g^{\ast \leq 11} \) is surjective, the naturality implies that \( \alpha \), \( \beta \), \( \gamma \), and \( \delta \) also survive to the \( E_{\infty} \)-page of the Leray–Serre spectral sequence. Thus, \( B \widetilde{g}^{\ast}(x_2) \), \( B \widetilde{g}^{\ast}(x_3) \), \( B \widetilde{g}^{\ast}(x_5) \), and \( B \widetilde{g}^{\ast}(x_9) \) map to generators of \( H^{\ast}(B(SU(8)/\mathbb{Z}_2); \mathbb{Z}_2) \).

We denote these generators by \( y_4 \), \( y_6 \), \( y_{10} \), \( x_2 \), \( x_3 \), \( x_5 \), and \( x_9 \), using the same notation as in the case of \( H^{\ast}(BSs(16); \mathbb{Z}_2) \).

The 9-dimensional relation is mapped to
\[
B \widetilde{g}^{\ast}((x_5 + x_2 x_3)y_4 + x_3 y_6 + x_2 y_7) = (x_5 + x_2 x_3)y_4 + x_3 y_6 + x_2 Sq^1(y_6),
\]
and this is canceled by \( c_4 \) through the differential map in the Leray–Serre spectral sequence.

By considering a similar argument for \( H^{\ast}(B(Sp(4)/\mathbb{Z}_2); \mathbb{Z}_2) \), the following conclusions about the cohomology rings are drawn:

\begin{align*}
H^{\ast\leq 11}(B(Sp(4)/\Z_2); \Z_2) &= \Z_2[x_2, x_3, x_5, x_9, y_4]/((x_5+x_2x_3)y_4+x_3Sq^2(y_4)+x_2Sq^3(y_4)),\\
H^{\ast\leq 11}(B(SU(8)/\Z_2); \Z_2) &= \Z_2[x_2, x_3, x_5, x_9, y_4, y_6, y_{10}]/((x_5 + x_2 x_3)y_4 + x_3 y_6 + x_2 Sq^1(y_6)), \\
H^{\ast\leq 11}(BSs(16); \Z_2) &= \Z_2[x_2, x_3, x_5, x_9, y_4, y_6, y_7, y_{10}, y_{11}]/((x_5+x_2x_3)y_4+x_3y_6+x_2y_7).
\end{align*}

Next, we determine the destination of each element under the Steenrod action. An important lemma is as follows.

\begin{lemma}  
\[
 Sq^2(y_4) = x_2y_4 \in H^{\ast}(B(Sp(4)/\mathbb{Z}_2); \mathbb{Z}_2).
\]
\end{lemma}

\begin{proof}
   This follows from the use of the following commutative diagram.

\[
\begin{tikzcd}
SO(5) \arrow[rr, hook, "diagonal"] & \arrow[d, "\circlearrowleft", phantom]& SO(5)\times SO(5) \\
Sp(2)/\mathbb{Z}_2 \arrow[r, hook, "diagonal"] \arrow[u, "\simeq"'] & (Sp(2)\times Sp(2))/{\{\pm (I, I)\}} \arrow[r, two heads, "pr"] \arrow[d, hook]& (Sp(2)/\mathbb{Z}_2)\times (Sp(2)/\mathbb{Z}_2) \arrow[u, "\simeq"'] \\
& Sp(4)/\mathbb{Z}_2 
\end{tikzcd}
\]

Here, since \( Sp(2) \) and \( Spin(5) \) are isomorphic as Lie groups, \( Sp(2)/\mathbb{Z}_2 \) and \( SO(5) \) are also isomorphic as Lie groups. In the following, we identify these two Lie groups.

First, we have the following cohomology rings:
\begin{align*}
H^{\ast \leq 6}(BSp(2); \mathbb{Z}_2) &= \mathbb{Z}_2[p_1],\\
H^{\ast \leq 6}(B^2\mathbb{Z}_2; \mathbb{Z}_2) &= \mathbb{Z}_2[\alpha, \beta, \gamma],\\
H^{\ast}(B(Sp(2)/\mathbb{Z}_2); \mathbb{Z}_2) &= H^{\ast}(BSO(5); \mathbb{Z}_2) = \mathbb{Z}_2[w_2, w_3, w_4, w_5].\\
\end{align*}

Considering the Leray-Serre spectral sequence associated with the fibration 

\[
\begin{tikzcd}
BSp(2) \arrow[r, "a"] & B(Sp(2)/\mathbb{Z}_2) \arrow[r, "b"] & B^2\mathbb{Z}_2, 
\end{tikzcd}
\]
we see that \( p_1, \alpha, \beta, \gamma \) all survive and correspond to the respective Stiefel-Whitney classes in their dimensions.

Next, consider the following maps of fibrations. 
\begin{equation}\label{Sp(2):fibration1}
    \begin{tikzcd}
BSp(2) \arrow[r, "a"] \arrow[d, "diagonal"'] & B(Sp(2)/\mathbb{Z}_2) \arrow[r, "b"] \arrow[d, "h"'] & B^2\mathbb{Z}_2 \arrow[d, "id"] \\
BSp(2)\times BSp(2) \arrow[r, "c"] \arrow[d, "id"'] & B((Sp(2)\times Sp(2))/{\{\pm (I, I)\}}) \arrow[r, "d"] \arrow[d, "i"'] & B^2\mathbb{Z}_2 \arrow[d, "diagonal"] \\
BSp(2)\times BSp(2) \arrow[r, "a\times a"] & B(Sp(2)/\mathbb{Z}_2)\times B(Sp(2)/\mathbb{Z}_2) \arrow[r, "b\times b"] & B^2\mathbb{Z}_2\times B^2\mathbb{Z}_2
\end{tikzcd}
\end{equation}

In this case, as previously discussed, the Leray-Serre spectral sequence for the bottom fibration has trivial differential maps in dimensions up to 6. By naturality, the Leray-Serre spectral sequences for all fibrations collapse at the \( E_2 \)-page within the range in dimensions up to 6.

Consider \( w_4 + w_4' \in H^4(B(Sp(2)/\mathbb{Z}_2) \times B(Sp(2)/\mathbb{Z}_2); \mathbb{Z}_2) \), where the parts related to the second component are written with a prime (\('\)). We have 
\[
c^{\ast} \circ i^{\ast}(w_4 + w_4') = (a \times a)^{\ast}(w_4 + w_4') = p_1 + p_1'.
\]

Here, for the following maps of fibrations
\begin{equation}\label{Sp(2):fibration2}
    \begin{tikzcd}
BSp(2)\times BSp(2) \arrow[r, "c"] \arrow[d, "j"'] & B((Sp(2)\times Sp(2))/{\{\pm (I, I)\}}) \arrow[r, "d"] \arrow[d, "k"'] & B^2\mathbb{Z}_2 \arrow[d, "id"'] \\
BSp(4) \arrow[r, "\lambda"] & B(Sp(4)/\mathbb{Z}_2) \arrow[r, "\phi"] & B^2\mathbb{Z}_2,
\end{tikzcd}
\end{equation}
since the Leray-Serre spectral sequences for both fibrations collapse at the \( E_2 \)-page within the range up to dimensions 6, the morphism between exact sequences is constructed as follows.

\[
\begin{tikzcd}[column sep=0.7em, row sep=2em]
0 & H^4(BSp(2)\times BSp(2); \mathbb{Z}_2) \arrow[l, ""] \arrow[rd, "\circlearrowleft", phantom]& H^4(B((Sp(2)\times Sp(2))/{\{\pm (I, I)\}}); \mathbb{Z}_2) \arrow[l, "c^{\ast}"] \arrow[rd, "\circlearrowleft", phantom] & H^4(B^2\mathbb{Z}_2; \mathbb{Z}_2) \arrow[l, "d^{\ast}"] & 0 \arrow[l, ""] \\
0 & H^4(BSp(4); \mathbb{Z}_2) \arrow[l, ""] \arrow[u, "j^{\ast}"] & H^4(B(Sp(4)/\mathbb{Z}_2); \mathbb{Z}_2) \arrow[l, "\lambda^{\ast}"] \arrow[u, "k^{\ast}"] & H^4(B^2\mathbb{Z}_2; \mathbb{Z}_2) \arrow[l, "\phi^{\ast}"] \arrow[u, "id"] & 0 \arrow[l, ""]
\end{tikzcd}
\]

Here, since \( j^{\ast}(1 + p_1 + p_2 + p_3 + p_4) = (1 + p_1 + p_2)(1 + p_1' + p_2') \), we have \( j^{\ast}(p_1) = p_1 + p_1' \). Therefore, 
\[
c^{\ast}(k^{\ast}(y_4)) = j^{\ast} \circ \lambda^{\ast}(y_4) = j^{\ast}(p_1) = p_1 + p_1' = c^{\ast}(i^{\ast}(w_4 + w_4')),
\]
which implies 
\[
k^{\ast}(y_4) - i^{\ast}(w_4 + w_4') \in \ker c^{\ast} = \mathrm{Im}\, d^{\ast}.
\]

If \( k^{\ast}(y_4) - i^{\ast}(w_4 + w_4') = d^{\ast}(\alpha^2) \) holds, then 
\[
(k \circ h)^{\ast}(y_4) = h^{\ast}(i^{\ast}(w_4 + w_4') + d^{\ast}(\alpha^2)) = (i \circ h)^{\ast}(w_4 + w_4') + b^{\ast}(\alpha)^2.
\]
Considering that \( i \circ h \) is the diagonal map and \( b^{\ast} \) maps 2-dimensional generators to 2-dimensional generators, we obtain 
\[
(k \circ h)^{\ast}(y_4) = w_2^2.
\]

However, since \( (x_5 + x_2x_3)y_4 + x_3Sq^2(y_4) + x_2Sq^3(y_4) = 0 \) in \( H^4(B(Sp(4)/\mathbb{Z}_2); \mathbb{Z}_2) \), we must have 
\[
(k \circ h)^{\ast}((x_5 + x_2x_3)y_4 + x_3Sq^2(y_4) + x_2Sq^3(y_4)) = 0.
\]
On the other hand, using \( (k \circ h)^{\ast}(x_2) = (k \circ h)^{\ast}(\phi^{\ast}(\alpha)) = b^{\ast}(\alpha) = w_2 \) and Wu's formula, we calculate the left-hand side as follows:
\[
\text{(LHS)} = (Sq^2(Sq^1(w_2)) + w_2Sq^1(w_2))w_2^2 + Sq^1(w_2)Sq^2(w_2^2) + w_2Sq^3(w_2^2) = w_5w_2^2 + w_3^3 \neq 0,
\]
which leads to a contradiction. Therefore, we conclude that 
\begin{equation}\label{ky_4}
    k^{\ast}(y_4) = i^{\ast}(w_4 + w_4').
\end{equation}

Thus, we have
\begin{align*}
k^{\ast}(Sq^2(y_4)) &= i^{\ast}(Sq^2(w_4 + w_4')) = i^{\ast}(w_2w_4 + w_2'w_4') = i^{\ast}(w_2)i^{\ast}(w_4) + i^{\ast}(w_2')i^{\ast}(w_4') \\
 &= d^{\ast}(\alpha)i^{\ast}(w_4 + w_4') = k^{\ast}(\phi^{\ast}(\alpha))k^{\ast}(y_4) \\
 &= k^{\ast}(x_2)k^{\ast}(y_4) = k^{\ast}(x_2y_4).
\end{align*}

Within the range up to dimension 6, combining the injectivity of \( j^{\ast} \) with the fact that the Leray-Serre spectral sequences collapse at the \( E_2 \)-term, we conclude that \( k^{\ast} \) is injective in this range. 
Therefore, we obtain \( Sq^2(y_4) = x_2y_4 \).
\end{proof}

From this, we have 
\[
B\widetilde{f}^{\ast}(y_6) = B\widetilde{f}^{\ast}(Sq^2(y_4)) = Sq^2(y_4) = x_2y_4 \in H^4(B(Sp(4)/\mathbb{Z}_2); \mathbb{Z}_2),
\]
and hence
\[
B\widetilde{f}^{\ast}(B\widetilde{g}^{\ast}(y_7)) = B\widetilde{f}^{\ast}(B\widetilde{g}^{\ast}(Sq^1(y_6))) = Sq^1(x_2y_4) = x_3y_4,
\]
which uniquely determines \( B\widetilde{g}^{\ast}(y_7) = x_3y_4 \). The images of the remaining generators can be determined by calculating the images of these elements under Steenrod operations.
\end{proof}

\begin{remark}
The next-dimensional relation of the given 9-dimensional relation lies in the 13-dimension by using Leray-Serre spectral sequence carefully. 
Therefore, we can construct the $E_2$-term of the Adams spectral sequence up to 11-dimension from the mentioned table.
However, since my interest lies in lower dimensions, this will not be addressed in this paper.
\end{remark}

\subsection{\texorpdfstring{Calculation of the Spin-$G$ bordism in dimensions up to 7}{Calculation of the Spin-G bordism in dimensions up to 7}}

In this subsection, we run the Adams spectral sequence
for the three Spin-$G$ bordism groups.

\begin{thm}\label{thm:twistedspinbordism}
Up to degree $7$, we have the following twisted Spin bordism groups.
\begin{table}[ht]
\centering
\begin{tabular}{|c|c|c|c|c|c|c|c|c|c|c|c|}
\hline
k & $0$ & $1$ & $2$ & $3$ & $4$ & $5$ & $6$ & $7$ \\ \hline
$\Omega_k^{\Spin\text{-}Sp(4)}$ & $\Z$ & $0$ & $0$ & $0$ & $\Z\oplus \Z$ & $\Z_2\oplus \Z_2$ & $\Z_2\oplus \Z_2$ & $0$ \\ \hline
$\Omega_k^{\Spin\text{-}SU(8)}$ & $\Z$ & $0$ & $0$ & $0$ & $\Z\oplus \Z$ & $\Z_2$ & $\Z\oplus \Z_2$ & $0$ \\ \hline
$\Omega_k^{\Spin\text{-}Spin(16)}$ & $\Z$ & $0$ & $0$ & $0$ & $\Z\oplus \Z$ & $\Z_2$ & $\Z_2$ & $0$ \\ \hline
\end{tabular}
\end{table}
\end{thm}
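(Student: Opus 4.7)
The plan is to determine the three bordism groups by (i) computing the free part rationally and (ii) computing the $2$-torsion part via the Adams spectral sequence for connective $ko$-homology; the preceding proposition eliminates all odd-prime torsion, so these two inputs suffice.

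\emph{Free part.} Rationally, $B(\Spin\text{-}G) \to BSO \times BH$ is an equivalence, so $\Omega_*^{\Spin\text{-}G}\otimes\Q \cong \Omega_*^{SO}(BH)\otimes\Q$, which the Atiyah-Hirzebruch spectral sequence computes from $H_*(BH; \Q)$ and $\Omega_*^{SO}\otimes\Q$. Using $H^*(BSp(4); \Q) = \Q[p_1, p_2, p_3, p_4]$, $H^*(BSU(8); \Q) = \Q[c_2, \ldots, c_8]$, and $H^*(BSpin(16); \Q) = \Q[p_1, \ldots, p_7, e]$ with $|e| = 16$, one reads off the claimed free ranks in dimensions $\leq 7$: rank $1$ at degree $0$, rank $2$ at degree $4$, and, only for $SU(8)$, an extra rank $1$ at degree $6$ coming from $c_3$.

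\emph{Two-torsion.} The Anderson-Brown-Peterson splitting \eqref{ABP} reduces the computation, in degrees $n \leq 7$, to $ko_n(BH^{V-r_V})_{\widehat{2}}$ since the other summands start in degree $8$. I would then run the Adams spectral sequence
\[
E_2^{s,t} = \mathrm{Ext}^{s,t}_{\mathcal{A}(1)}\bigl(H^*(BH^{V-r_V}; \Z_2), \Z_2\bigr) \Rightarrow ko_{t-s}(BH^{V-r_V})_{\widehat{2}}
\]
through the strip $t-s \leq 7$. The $\mathcal{A}(1)$-module $H^*(BH^{V-r_V}; \Z_2)$ is $H^*(BH; \Z_2) \cdot U$ with $Sq^1 U = 0$ and $Sq^2 U = \zeta U = x_2 U$, the higher operations determined by Wu's formula; this description is legitimised by the Debray-Yu theorem even when $V$ does not exist as a genuine vector bundle. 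Combining it with the Steenrod tables of Theorem \ref{thm:cohomology ring}, I would decompose $H^{*\leq 7}(BH^{V-r_V}; \Z_2)$ into standard $\mathcal{A}(1)$-module building blocks (free $\mathcal{A}(1)$-modules, jokers, shifts of the question-mark complex, and trivial $\Z_2$'s), whose Ext charts are catalogued in \cite{BC18, DDHM23, Kn24}, and superimpose these charts to obtain the $E_2$-page. The remaining differentials and $h_0$-extensions in the strip $t-s \leq 7$ are then pinned down by matching against the rational ranks computed above and the 5-equivalence of Proposition \ref{prop:5-equivalence}.

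The main obstacle I anticipate is the $\mathcal{A}(1)$-module decomposition for the $Ss(16)$ case: the $9$-dimensional relation $(x_5 + x_2 x_3)y_4 + x_3 y_6 + x_2 y_7 = 0$ together with the twist $Sq^2 U = x_2 U$ couples $B^2\Z_2$-type generators to spin-type generators, making the direct-sum decomposition more delicate than in the $Sp(4)$ and $SU(8)$ cases, where the Thom-shifted modules are smaller and split more transparently. Once these building blocks are identified, the rest of the computation is essentially mechanical, and Proposition \ref{prop:5-equivalence} enforces internal consistency across the three tables by requiring agreement through degree $4$ and compatibility of the natural maps in higher degrees.
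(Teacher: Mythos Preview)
Your overall strategy---Adams spectral sequence over $\mathcal{A}(1)$ applied to the twisted Thom spectrum via the Debray--Yu construction---is exactly what the paper does, and your description of the $\mathcal{A}(1)$-module $H^*(BH;\Z_2)\cdot U$ with $Sq^2U = x_2U$ is correct. The separate rational computation is a harmless addition (the paper simply reads the free ranks off the $h_0$-towers), and your building-block decomposition is the same idea as the paper's Figures~\ref{Sp(4)_sseq}--\ref{Spin(16)_sseq}.

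There is, however, a real gap in how you propose to resolve the extension problems. Matching rational ranks and invoking Proposition~\ref{prop:5-equivalence} only constrains degrees $\leq 4$; neither tool distinguishes $\Z_4$ from $\Z_2\oplus\Z_2$ at degrees $5$ and $6$ in the $Sp(4)$ case, nor $\Z$ from $\Z\oplus\Z_2$ at degree $6$ in the $SU(8)$ case. In each of these the $E_\infty$-page has an isolated class in filtration $0$ (coming from a free $\mathcal{A}(1)$-summand) sitting below either another torsion class or an $h_0$-tower, and a hidden $h_0$-extension is not excluded by anything you have written. The paper settles this with Margolis' theorem: classes arising from free $\mathcal{A}(1)$-summands split off as direct summands in homotopy, so no hidden extensions touch them. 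You need this (or an explicit generator argument) to finish. Your proposed use of ``compatibility of the natural maps in higher degrees'' does not help either, since both possible answers are compatible with the maps to the $SU(8)$ and $Spin(16)$ groups.

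A smaller point: your anticipated obstacle is misplaced. The $9$-dimensional relation in $H^*(BSs(16);\Z_2)$ lives above the range needed for bordism through degree $7$, so it plays no role in the decomposition; if anything the $Ss(16)$ module is the \emph{simplest} of the three in this range (the $y_4$-block is another copy of the same module type as the $U$- and $Ux_2^2$-blocks), while the genuinely new feature is the $Sq^2$-link $y_4\to y_6$ in the $SU(8)$ case, which produces the extra $\Z$-tower at degree $6$.
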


\begin{proof}

Since \eqref{Sp(4)fibration}, \eqref{SU(8)fibration}, and \eqref{Spin(16)fibration} are all universal coverings, they are nontrivial as $\mathbb{Z}_2$-bundles. Therefore, the elements of $H^2(BH;\mathbb{Z}_2)$ corresponding to these central extensions are nonzero and so are $x_2$ by using theorem \ref{thm:cohomology ring}.

The map $M^{MTSpin}f_{0, x_2} \to M^{ko}f_{0, x_2}$ induces isomorphisms of homotopy groups up to dimension $7$. Thus, for computations of bordism groups up to dimension $7$, it suffices to consider $M^{ko}f_{0, x_2}$. For detailed terminology, see \cite[Section 3.1]{DY23}.

By combining \cite[Theorem 2.28(3)]{DY23} and the results of Theorem \ref{thm:cohomology ring}, the $\mathcal{A}_1$-module structure of $H_{ko}^{\ast}(M^{ko}f_{0, x_2})$ is computed for each of the three Spin-$G$ structures. 

Here, the goal is to compute $\pi_{\ast\leq 7}(M^{ko}f_{0, x_2})$, and we consider replacing $M^{ko}f_{0, x_2}$ with another $\mathcal{A}_1$-module to facilitate the computation. The following table represents, for each of the three Spin-$G$ structures, an $\mathcal{A}_1$-module that satisfies the following conditions:
\begin{itemize}
    \item In dimensions up to $7$, it is identical to $M^{ko}f_{0, x_2}$ as a $\Z_2$-module.
    \item In dimensions up to $8$, the action of $\mathcal{A}_1$ is completely consistent with that on $M^{ko}f_{0, x_2}$.
\end{itemize}

\newpage
\begin{center}
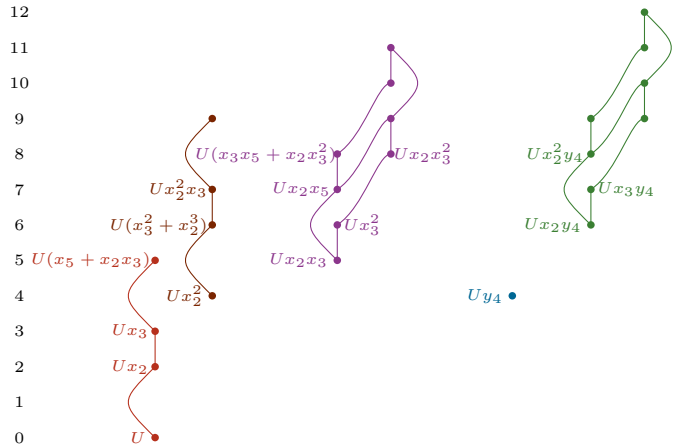
\begin{figure}[ht]
\begin{subfigure}[c]{0.15\textwidth\hskip -110pt}
\begin{tikzpicture}[scale=0.47, every node/.style = {font = \tiny}]
        \foreach \y in {0, ..., 12} {
                \node at (-10.3, \y) {$\y$};
        }
        \begin{scope}[BrickRed]
                \Mzero{-6.5}{0}{$U$};
                \draw (-7.2,2) node {$Ux_2$};
                \draw (-7.2,3) node {$Ux_3$};
                \draw (-8.3,5) node {$U(x_5+x_2x_3)$};
        \end{scope}
        \begin{scope}[Brown]
                \Mzero{-4.9}{4}{$Ux_2^2$};
                \draw (-6.4,6) node {$U(x_3^2+x_2^3)$};
                \draw (-5.85,7) node {$Ux_2^2x_3$};
        \end{scope}
        \begin{scope}[Fuchsia]
                \Aone{-1.4}{5}{$Ux_2x_3$};
                \draw (-0.7,6) node {$Ux_3^2$};
                \draw (-2.4,7) node {$Ux_2x_5$};
                \draw (-3.4,8) node {$U(x_3x_5+x_2x_3^2)$};
                \draw (1,8) node {$Ux_2x_3^2$};
        \end{scope}
        \begin{scope}[MidnightBlue]
                \tikzpt{3.5}{4}{$Uy_4$}{};
        \end{scope}
        \begin{scope}[OliveGreen]
                \Aone{5.7}{6}{$Ux_2y_4$}{};
                \draw (6.7,7) node {$Ux_3y_4$};
                \draw (4.7,8) node {$Ux_2^2y_4$};
        \end{scope}
\end{tikzpicture}
\end{subfigure}
\caption{The case of Spin-$Sp(4)$}
\label{Sp(4)_sseq}
\end{figure}
\end{center}
\begin{center}
\begin{figure}[ht]
\begin{subfigure}[c]{0.15\textwidth\hskip -110pt}
\begin{tikzpicture}[scale=0.47, every node/.style = {font = \tiny}]
        \foreach \y in {0, ..., 12} {
                \node at (-10.3, \y) {$\y$};
        }
        \begin{scope}[BrickRed]
                \Mzero{-6.5}{0}{$U$};
                \draw (-7.2,2) node {$Ux_2$};
                \draw (-7.2,3) node {$Ux_3$};
                \draw (-8.3,5) node {$U(x_5+x_2x_3)$};
        \end{scope}
        \begin{scope}[Brown]
                \Mzero{-4.9}{4}{$Ux_2^2$};
                \draw (-6.4,6) node {$U(x_3^2+x_2^3)$};
                \draw (-5.85,7) node {$Ux_2^2x_3$};
        \end{scope}
        \begin{scope}[Fuchsia]
                \Aone{-1.4}{5}{$Ux_2x_3$};
                \draw (-0.7,6) node {$Ux_3^2$};
                \draw (-2.4,7) node {$Ux_2x_5$};
                \draw (-3.4,8) node {$U(x_3x_5+x_2x_3^2)$};
                \draw (1,8) node {$Ux_2x_3^2$};
        \end{scope}
        \begin{scope}[MidnightBlue]
                \tikzpt{3.5}{4}{$Uy_4$}{};
                \tikzpt{3.5}{6}{$U(y_6+x_2y_4)$}{};
                \sqtwoL(3.5, 4);
        \end{scope}
        \begin{scope}[OliveGreen]
                \Aone{5.7}{6}{$Ux_2y_4$}{};
                \draw (6.7,7) node {$Ux_3y_4$};
                \draw (4.7,8) node {$Ux_2y_6$};
        \end{scope}
\end{tikzpicture}
\end{subfigure}
\caption{The case of Spin-$SU(8)$}
\label{SU(8)_sseq}
\end{figure}
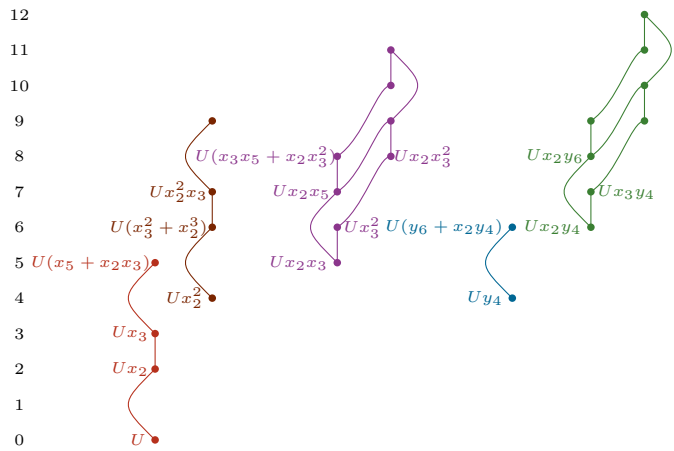
\end{center}
\begin{center}
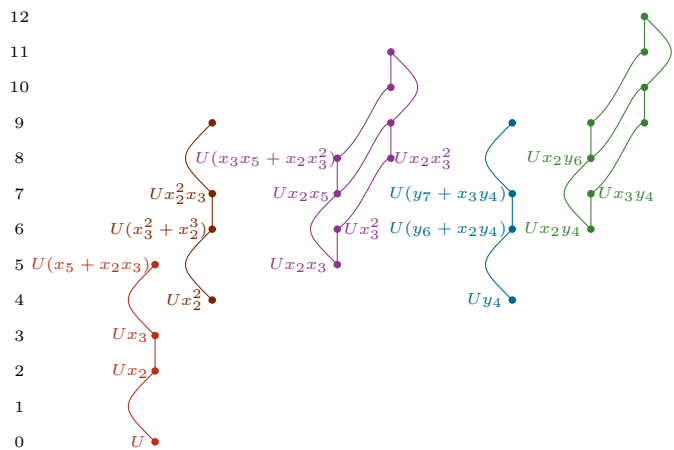
\begin{figure}[ht]
\begin{subfigure}[c]{0.15\textwidth\hskip -110pt}
\begin{tikzpicture}[scale=0.47, every node/.style = {font = \tiny}]
        \foreach \y in {0, ..., 12} {
                \node at (-10.3, \y) {$\y$};
        }
        \begin{scope}[BrickRed]
                \Mzero{-6.5}{0}{$U$};
                \draw (-7.2,2) node {$Ux_2$};
                \draw (-7.2,3) node {$Ux_3$};
                \draw (-8.3,5) node {$U(x_5+x_2x_3)$};
        \end{scope}
        \begin{scope}[Brown]
                \Mzero{-4.9}{4}{$Ux_2^2$};
                \draw (-6.4,6) node {$U(x_3^2+x_2^3)$};
                \draw (-5.85,7) node {$Ux_2^2x_3$};
        \end{scope}
        \begin{scope}[Fuchsia]
                \Aone{-1.4}{5}{$Ux_2x_3$};
                \draw (-0.7,6) node {$Ux_3^2$};
                \draw (-2.4,7) node {$Ux_2x_5$};
                \draw (-3.4,8) node {$U(x_3x_5+x_2x_3^2)$};
                \draw (1,8) node {$Ux_2x_3^2$};
        \end{scope}
        \begin{scope}[MidnightBlue]
                \Mzero{3.5}{4}{$Uy_4$};
                \draw (1.7,6) node {$U(y_6+x_2y_4)$};
                \draw (1.7,7) node {$U(y_7+x_3y_4)$};
        \end{scope}
        \begin{scope}[OliveGreen]
                \Aone{5.7}{6}{$Ux_2y_4$}{};
                \draw (6.7,7) node {$Ux_3y_4$};
                \draw (4.7,8) node {$Ux_2y_6$};
        \end{scope}
\end{tikzpicture}
\end{subfigure}
\caption{The case of Spin-$Spin(16)$}
\label{Spin(16)_sseq}
\end{figure}
\end{center}

These $\mathcal{A}_1$-modules, when applied to $\mathrm{Ext}_{\mathcal{A}_1}(-, \mathbb{Z}_2)$, form the $E_2$-term of the Adams spectral sequence that converges to $\Omega_{\ast}^{\Spin\text{-}G}$. Using the concrete example from Section 4 of \cite{BC18}, we find that the desired $E_2$-term is given in the following table.

\begin{table}[ht]
\centering
\begin{tabular}{c}
\begin{tikzpicture}[scale=0.83]
\begin{axis}[
    xlabel={\(t-s\)}, 
    ylabel={\(s\)},   
    axis lines=middle,
    xtick={0, 1, 2, 3, 4, 5, 6, 7},
    ytick={0, 1, 2, 3, 4, 5, 6},
    xmin=0, xmax=7,
    ymin=0, ymax=6,
    grid=major,
    legend style={at={(0.95,0.95)},anchor=north east}
]

\addplot[only marks, mark=*, mark options={draw=Fuchsia, fill=white, scale=1.5}] coordinates {
    (5,0)
};

\addplot[only marks, mark=*, color=BrickRed] coordinates {
    (0,0) (0,1) (0,2) (0,3) (0,4) (0,5) (0,6)
};

\addplot[only marks, mark=*, color=Brown] coordinates {
    (4,0) (4,1) (4,2) (4,3) (4,4) (4,5) (4,6)
};

\addplot[only marks, mark=*, color=Fuchsia] coordinates {
    (5,0)
};

\addplot[only marks, mark=*, mark options={draw=OliveGreen, fill=white, scale=1.5}] coordinates {
    (6,0)
};

\addplot[only marks, mark=*, color=MidnightBlue] coordinates {
    (4.1,0) (4.1,1) (4.1,2) (4.1,3) (4.1,4) (4.1,5) (4.1,6)
    (5,1) (6,2)
};

\addplot[only marks, mark=*, color=OliveGreen] coordinates {
    (6,0)
};

\draw[-, thick, color=BrickRed] (axis cs:0,0) -- (axis cs:0,1);
\draw[-, thick, color=BrickRed] (axis cs:0,1) -- (axis cs:0,2);
\draw[-, thick, color=BrickRed] (axis cs:0,2) -- (axis cs:0,3);
\draw[-, thick, color=BrickRed] (axis cs:0,3) -- (axis cs:0,4);
\draw[-, thick, color=BrickRed] (axis cs:0,4) -- (axis cs:0,5);
\draw[-, thick, color=BrickRed] (axis cs:0,5) -- (axis cs:0,6);
\draw[-, thick, color=Brown] (axis cs:4,0) -- (axis cs:4,1);
\draw[-, thick, color=Brown] (axis cs:4,1) -- (axis cs:4,2);
\draw[-, thick, color=Brown] (axis cs:4,2) -- (axis cs:4,3);
\draw[-, thick, color=Brown] (axis cs:4,3) -- (axis cs:4,4);
\draw[-, thick, color=Brown] (axis cs:4,4) -- (axis cs:4,5);
\draw[-, thick, color=Brown] (axis cs:4,5) -- (axis cs:4,6);
\draw[-, thick, color=MidnightBlue] (axis cs:4.1,0) -- (axis cs:4.1,1);
\draw[-, thick, color=MidnightBlue] (axis cs:4.1,1) -- (axis cs:4.1,2);
\draw[-, thick, color=MidnightBlue] (axis cs:4.1,2) -- (axis cs:4.1,3);
\draw[-, thick, color=MidnightBlue] (axis cs:4.1,3) -- (axis cs:4.1,4);
\draw[-, thick, color=MidnightBlue] (axis cs:4.1,4) -- (axis cs:4.1,5);
\draw[-, thick, color=MidnightBlue] (axis cs:4.1,5) -- (axis cs:4.1,6);
\draw[-, thick, color=MidnightBlue] (axis cs:4.1,0) -- (axis cs:5,1);
\draw[-, thick, color=MidnightBlue] (axis cs:5,1) -- (axis cs:6,2);
\end{axis}
\end{tikzpicture}
\end{tabular}
\caption{$E_2$-term of $\Omega_{\ast}^{\Spin\text{-}Sp(4)}$.}
\label{tab:Sp(4)E_2}
\end{table}
\begin{table}[ht]
\centering
\begin{tabular}{c}
\begin{tikzpicture}[scale=0.83]
\begin{axis}[
    xlabel={\(t-s\)}, 
    ylabel={\(s\)},   
    axis lines=middle,
    xtick={0, 1, 2, 3, 4, 5, 6, 7},
    ytick={0, 1, 2, 3, 4, 5, 6},
    xmin=0, xmax=7,
    ymin=0, ymax=6,
    grid=major,
    legend style={at={(0.95,0.95)},anchor=north east}
]

\addplot[only marks, mark=*, mark options={draw=Fuchsia, fill=white, scale=1.5}] coordinates {
    (5,0)
};

\addplot[only marks, mark=*, color=BrickRed] coordinates {
    (0,0) (0,1) (0,2) (0,3) (0,4) (0,5) (0,6)
};

\addplot[only marks, mark=*, color=Brown] coordinates {
    (4,0) (4,1) (4,2) (4,3) (4,4) (4,5) (4,6)
};

\addplot[only marks, mark=*, color=Fuchsia] coordinates {
    (5,0)
};

\addplot[only marks, mark=*, mark options={draw=OliveGreen, fill=white, scale=1.5}] coordinates {
    (6,0)
};

\addplot[only marks, mark=*, color=MidnightBlue] coordinates {
    (4.1,0) (4.1,1) (4.1,2) (4.1,3) (4.1,4) (4.1,5) (4.1,6)
    (6,1) (6,2) (6,3) (6,4) (6,5) (6,6) 
};

\addplot[only marks, mark=*, color=OliveGreen] coordinates {
    (6,0)
};

\draw[-, thick, color=BrickRed] (axis cs:0,0) -- (axis cs:0,1);
\draw[-, thick, color=BrickRed] (axis cs:0,1) -- (axis cs:0,2);
\draw[-, thick, color=BrickRed] (axis cs:0,2) -- (axis cs:0,3);
\draw[-, thick, color=BrickRed] (axis cs:0,3) -- (axis cs:0,4);
\draw[-, thick, color=BrickRed] (axis cs:0,4) -- (axis cs:0,5);
\draw[-, thick, color=BrickRed] (axis cs:0,5) -- (axis cs:0,6);
\draw[-, thick, color=Brown] (axis cs:4,0) -- (axis cs:4,1);
\draw[-, thick, color=Brown] (axis cs:4,1) -- (axis cs:4,2);
\draw[-, thick, color=Brown] (axis cs:4,2) -- (axis cs:4,3);
\draw[-, thick, color=Brown] (axis cs:4,3) -- (axis cs:4,4);
\draw[-, thick, color=Brown] (axis cs:4,4) -- (axis cs:4,5);
\draw[-, thick, color=Brown] (axis cs:4,5) -- (axis cs:4,6);
\draw[-, thick, color=MidnightBlue] (axis cs:4.1,0) -- (axis cs:4.1,1);
\draw[-, thick, color=MidnightBlue] (axis cs:4.1,1) -- (axis cs:4.1,2);
\draw[-, thick, color=MidnightBlue] (axis cs:4.1,2) -- (axis cs:4.1,3);
\draw[-, thick, color=MidnightBlue] (axis cs:4.1,3) -- (axis cs:4.1,4);
\draw[-, thick, color=MidnightBlue] (axis cs:4.1,4) -- (axis cs:4.1,5);
\draw[-, thick, color=MidnightBlue] (axis cs:4.1,5) -- (axis cs:4.1,6);
\draw[-, thick, color=MidnightBlue] (axis cs:6,1) -- (axis cs:6,2);
\draw[-, thick, color=MidnightBlue] (axis cs:6,2) -- (axis cs:6,3);
\draw[-, thick, color=MidnightBlue] (axis cs:6,3) -- (axis cs:6,4);
\draw[-, thick, color=MidnightBlue] (axis cs:6,4) -- (axis cs:6,5);
\draw[-, thick, color=MidnightBlue] (axis cs:6,5) -- (axis cs:6,6);

\end{axis}
\end{tikzpicture}
\end{tabular}
\caption{$E_2$-term of $\Omega_{\ast}^{\Spin\text{-}SU(8)}$.}
\label{tab:SU(8)E_2}
\end{table}
\begin{table}[ht]
\centering
\begin{tabular}{c}
\begin{tikzpicture}[scale=0.83]
\begin{axis}[
    xlabel={\(t-s\)}, 
    ylabel={\(s\)},   
    axis lines=middle,
    xtick={0, 1, 2, 3, 4, 5, 6, 7},
    ytick={0, 1, 2, 3, 4, 5, 6},
    xmin=0, xmax=7,
    ymin=0, ymax=6,
    grid=major,
    legend style={at={(0.95,0.95)},anchor=north east}
]

\addplot[only marks, mark=*, mark options={draw=Fuchsia, fill=white, scale=1.5}] coordinates {
    (5,0)
};

\addplot[only marks, mark=*, color=BrickRed] coordinates {
    (0,0) (0,1) (0,2) (0,3) (0,4) (0,5) (0,6)
};

\addplot[only marks, mark=*, color=Brown] coordinates {
    (4,0) (4,1) (4,2) (4,3) (4,4) (4,5) (4,6)
};

\addplot[only marks, mark=*, color=Fuchsia] coordinates {
    (5,0)
};

\addplot[only marks, mark=*, mark options={draw=OliveGreen, fill=white, scale=1.5}] coordinates {
    (6,0)
};

\addplot[only marks, mark=*, color=MidnightBlue] coordinates {
    (4.1,0) (4.1,1) (4.1,2) (4.1,3) (4.1,4) (4.1,5) (4.1,6)
};

\addplot[only marks, mark=*, color=OliveGreen] coordinates {
    (6,0)
};

\draw[-, thick, color=BrickRed] (axis cs:0,0) -- (axis cs:0,1);
\draw[-, thick, color=BrickRed] (axis cs:0,1) -- (axis cs:0,2);
\draw[-, thick, color=BrickRed] (axis cs:0,2) -- (axis cs:0,3);
\draw[-, thick, color=BrickRed] (axis cs:0,3) -- (axis cs:0,4);
\draw[-, thick, color=BrickRed] (axis cs:0,4) -- (axis cs:0,5);
\draw[-, thick, color=BrickRed] (axis cs:0,5) -- (axis cs:0,6);
\draw[-, thick, color=Brown] (axis cs:4,0) -- (axis cs:4,1);
\draw[-, thick, color=Brown] (axis cs:4,1) -- (axis cs:4,2);
\draw[-, thick, color=Brown] (axis cs:4,2) -- (axis cs:4,3);
\draw[-, thick, color=Brown] (axis cs:4,3) -- (axis cs:4,4);
\draw[-, thick, color=Brown] (axis cs:4,4) -- (axis cs:4,5);
\draw[-, thick, color=Brown] (axis cs:4,5) -- (axis cs:4,6);
\draw[-, thick, color=MidnightBlue] (axis cs:4.1,0) -- (axis cs:4.1,1);
\draw[-, thick, color=MidnightBlue] (axis cs:4.1,1) -- (axis cs:4.1,2);
\draw[-, thick, color=MidnightBlue] (axis cs:4.1,2) -- (axis cs:4.1,3);
\draw[-, thick, color=MidnightBlue] (axis cs:4.1,3) -- (axis cs:4.1,4);
\draw[-, thick, color=MidnightBlue] (axis cs:4.1,4) -- (axis cs:4.1,5);
\draw[-, thick, color=MidnightBlue] (axis cs:4.1,5) -- (axis cs:4.1,6);
\end{axis}
\end{tikzpicture}
\end{tabular}
\caption{$E_2$-term of $\Omega_{\ast}^{\Spin\text{-}Spin(16)}$.}
\label{tab:Spin(16)E_2}
\end{table}
\!\!\!\!\!\!\!Here, since the differentials commute with the $\mathrm{Ext}_{\mathcal{A}_1}(\mathbb{Z}_2, \Z_2)$-action \cite[Section 11.3]{DDHM23}, it follows that all differentials vanish by considering the action of multiplication by $h_0$. Therefore, the $E_2$-terms listed above collapse at the $E_2$-page.

Furthermore, Margolis’ theorem \cite{Ma74} tells us that the circled nodes do not participate in any non-trivial extensions and simply split off. Using this, we can compute the three Spin-$G$ bordism groups under investigation.
\end{proof}

\section{Explicit description of generators by using manifolds}
\subsection{Determining the manifold generator at degree 4}

First, we determine the generators of the 4-dimensional part in terms of manifolds.  
Since \( \Omega_4^{\Spin\text{-}Sp(4)} \to \Omega_4^{\Spin\text{-}SU(8)} \to \Omega_4^{\Spin\text{-}Spin(16)} \) are all isomorphisms (Proposition \ref{prop:5-equivalence}), it suffices to find the generators of \( \Omega_4^{\Spin\text{-}Sp(4)} \cong \mathbb{Z} \oplus \mathbb{Z} \).

\begin{prop}  
\begin{equation}
(M, f_M:M\to B(Sp(4)/\Z_2)) \mapsto \left( sign(M)=\dfrac{1}{3}\int_M p_1(M), \int_M f^{\ast}z_4 \right)
\end{equation}
gives the isomorphism \( \Omega_4^{\Spin\text{-}Sp(4)} \cong \mathbb{Z} \oplus \mathbb{Z} \), where $z_4\in H^4(B(Sp(4)/\Z_2);\Z)\cong \Z$ is the generator.    
\end{prop}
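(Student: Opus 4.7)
The plan is to verify that the map
\[
\Phi\colon \Omega_4^{\Spin\text{-}Sp(4)}\longrightarrow\Z\oplus\Z,\qquad (M,f_M)\longmapsto\bigl(\operatorname{sign}(M),\ \langle f_M^*z_4,[M]\rangle\bigr)
\]
is a well-defined group homomorphism, and then to evaluate $\Phi$ on two explicit Spin-$Sp(4)$ classes whose images span $\Z\oplus\Z$. Since Theorem \ref{thm:twistedspinbordism} already yields $\Omega_4^{\Spin\text{-}Sp(4)}\cong\Z\oplus\Z$, any surjective homomorphism between free abelian groups of the same finite rank is automatically an isomorphism, so it suffices to exhibit two classes whose $\Phi$-values form the columns of an integer $2\times 2$ matrix of determinant $\pm 1$.

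Well-definedness is standard. The signature is an oriented-bordism invariant and equals $\tfrac{1}{3}\int_M p_1(M)$ by the Hirzebruch signature theorem. The pairing $\langle f_M^*z_4,[M]\rangle$ descends to Spin-$Sp(4)$ bordism because, for any bordism $(W,f_W)$ between $(M_0,f_{M_0})$ and $(M_1,f_{M_1})$, the pushforward of $[M_0]-[M_1]$ to $H_4(W)$ vanishes by exactness of the long exact sequence of the pair $(W,\partial W)$. Additivity under disjoint union is immediate.

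For the two distinguished classes I would take the generators $(\HP^1,f_1)$ and $(\CP^2,f_2)$ announced in Theorem \ref{thm:generator}. Equip $\HP^1$ with its standard spin structure (so $\operatorname{sign}(\HP^1)=0$) and let $f_1$ be the composition $\HP^1\hookrightarrow\HP^\infty=BSp(1)\to BSp(4)\to B(Sp(4)/\Z_2)$; then $f_1^*x_2=0=w_2(\HP^1)$. Equip $\CP^2$ (signature $1$) with any $f_2$ satisfying $f_2^*x_2=w_2(\CP^2)$; such a lift exists because the fiber $BSp(4)$ of $B(Sp(4)/\Z_2)\to B^2\Z_2$ is $3$-connected, killing all obstructions in degrees $\leq 4$ on $\CP^2$. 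Writing $\Phi(\HP^1,f_1)=(0,a)$ and $\Phi(\CP^2,f_2)=(1,b)$, the resulting matrix has determinant $-a$, so $\Phi$ is surjective once $a=\pm 1$ is established.

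The main obstacle is therefore showing that $f_1^*z_4$ generates $H^4(\HP^1;\Z)$. Since $\HP^1\to\HP^\infty\to BSp(4)$ represents a generator of $\pi_4(BSp(4))\cong\pi_3(Sp(4))\cong\Z$ and $BSp(4)$ is $3$-connected, the Hurewicz theorem makes this equivalent to showing that the edge map $H^4(B(Sp(4)/\Z_2);\Z)\to H^4(BSp(4);\Z)\cong\Z$ sends $z_4$ to $\pm p_1$. For this, one runs the integral Leray--Serre spectral sequence of $BSp(4)\to B(Sp(4)/\Z_2)\to B^2\Z_2$: since every higher-filtration contribution $E_\infty^{p,4-p}$ for $p\geq 1$ is $2$-primary torsion while the source $\Z$ is torsion-free, the edge map is injective with image of index a power of $2$; combining this with the mod-$2$ information from Theorem \ref{thm:cohomology ring}, under which $y_4$ pulls back to $p_1\bmod 2$, forces the index to be $1$, giving $a=\pm 1$ and hence the required determinant.
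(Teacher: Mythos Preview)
Your overall strategy coincides with the paper's: exhibit $\HP^1$ and $\CP^2$ and reduce surjectivity of $\Phi$ to showing that the restriction $H^4(B(Sp(4)/\Z_2);\Z)\to H^4(BSp(4);\Z)$ sends $z_4$ to $\pm p_1$. The problem is the final inference. Knowing only that $y_4$ restricts to $p_1\bmod 2$ does not force the integral index to be $1$: it shows that the mod-$2$ Serre differential $d_5^{(2)}$ vanishes, and naturality of the reduction map between spectral sequences then rules out $d_5(p_1)$ being a \emph{generator} of $E_5^{5,0}(\Z)=H^5(B^2\Z_2;\Z)\cong\Z_4$, but it still permits $d_5(p_1)=2\nu$, i.e.\ index $2$. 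In that scenario the mod-$2$ restriction remains surjective, because the element $y_4$ hitting $p_1\bmod 2$ need not be the reduction of any integral class.

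To exclude index $2$ you need exactly the Bockstein argument the paper isolates as a Lemma: if the index were $2$ then $H^5(B(Sp(4)/\Z_2);\Z)\cong\Z_2$, so the integral Bockstein $\beta\colon H^4(-;\Z_2)\to H^5(-;\Z)$ is nonzero (its kernel equals the image of $\rho_2\colon\Z\to\Z_2^2$, hence has order $2$), and since $\rho_2\colon\Z_2\hookrightarrow H^5(-;\Z_2)$ is injective, $Sq^1=\rho_2\circ\beta$ would be nonzero on $H^4(-;\Z_2)$. But the tables in Theorem~\ref{thm:cohomology ring} give $Sq^1 x_2^2=Sq^1 y_4=0$, a contradiction. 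So the input you actually need from Theorem~\ref{thm:cohomology ring} is not the single fact you cite but the vanishing of $Sq^1$ on all of $H^4$, combined with this Bockstein chase; this is precisely the content of the paper's Lemma establishing $d_5(p_1)=0$.
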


\begin{proof}
First, consider the Leray-Serre spectral sequence with \(\mathbb{Z}\)-coefficients for  
\[
\begin{tikzcd}
BSp(4) \arrow[r, ""] & B(Sp(4)/\mathbb{Z}_2) \arrow[r, ""] & B^2\mathbb{Z}_2.
\end{tikzcd}
\]

\[
\begin{array}{c|ccccccc}
5 & 0 & 0 & 0 & 0 & 0 & 0 & 0 \\
4 & p_1 & 0 & 0 & \mu p_1 & 0 & \nu p_1 & \mu^2 p_1 \\
3 & 0 & 0 & 0 & 0 & 0 & 0 & 0 \\
2 & 0 & 0 & 0 & 0 & 0 & 0 & 0 \\
1 & 0 & 0 & 0 & 0 & 0 & 0 & 0 \\
0 & 1 & 0 & 0 & \mu & 0 & \nu & \mu^2 \\ \hline
  & 0 & 1 & 2 & 3 & 4 & 5 & 6\\
\end{array}
\]

Here, \( \mu \in H^3(B^2\mathbb{Z}_2; \Z) \) corresponds to the 2-torsion, and \( \nu \in H^5(B^2\mathbb{Z}_2; \Z) \) corresponds to the 4-torsion \cite{Cl02}.

\begin{lemma}
 $d_5(p_1)=0$
\end{lemma}

\begin{proof}

It suffices to show that $H^5(B(Sp(4)/\mathbb{Z}_2);\mathbb{Z}) \cong 0$ and $H^5(B(Sp(4)/\mathbb{Z}_2);\mathbb{Z}) \cong \mathbb{Z}_2$ are not appropriate. We focus on the following exact Bockstein sequence.

\[
\begin{tikzcd}[column sep=small]
H^2(B(Sp(4)/\Z_2); \mathbb{Z})\arrow[r, "\times 2"]\cong 0 & H^2(B(Sp(4)/\Z_2); \mathbb{Z}) \arrow[r, "mod 2"]\cong 0
& H^2(B(Sp(4)/\Z_2); \mathbb{Z}_2)\arrow[lld, ""]\cong \Z_2\\ 
H^3(B(Sp(4)/\Z_2); \mathbb{Z})\arrow[r, "\times 2 "] \cong \Z_2& H^3(B(Sp(4)/\Z_2); \mathbb{Z}) \arrow[r, "mod 2"]\cong \Z_2
& H^3(B(Sp(4)/\Z_2); \mathbb{Z}_2)\arrow[lld, ""]\cong \Z_2\\ 
H^4(B(Sp(4)/\Z_2); \mathbb{Z})\arrow[r, hook, "\times 2"]\cong \Z & H^4(B(Sp(4)/\Z_2); \mathbb{Z}) \arrow[r, "mod 2"]\cong \Z
& H^4(B(Sp(4)/\Z_2); \mathbb{Z}_2)\arrow[lld, ""]\cong \Z_2\oplus \Z_2\\ 
H^5(B(Sp(4)/\Z_2); \mathbb{Z})\arrow[r, "\times 2"] & H^5(B(Sp(4)/\Z_2); \mathbb{Z}) \arrow[r, "mod 2"]
& H^5(B(Sp(4)/\Z_2); \mathbb{Z}_2) 
\end{tikzcd}
\]

By tracing the exact sequence, we see that the kernel of 
\[
H^4(B(Sp(4)/\mathbb{Z}_2);\mathbb{Z}_2) \to H^5(B(Sp(4)/\mathbb{Z}_2);\mathbb{Z})
\]
is isomorphic to $\mathbb{Z}_2$. This implies that $H^5(B(Sp(4)/\mathbb{Z}_2);\mathbb{Z}) \cong 0$ is not appropriate.

If we assume $H^5(B(Sp(4)/\mathbb{Z}_2);\mathbb{Z}) \cong \mathbb{Z}_2$, then for an element not in the kernel of 
\[
H^4(B(Sp(4)/\mathbb{Z}_2);\mathbb{Z}_2) \to H^5(B(Sp(4)/\mathbb{Z}_2);\mathbb{Z}),
\]
 its image under $Sq^1$ would be a nonzero element. However, since $Sq^1(x_2^2) = Sq^1(y_4) = 0$, this assumption is also not valid.
\end{proof}

Thus, the generator $z_4$ of $H^4(B(Sp(4)/\mathbb{Z}_2);\mathbb{Z}) \cong \mathbb{Z}$ maps to $p_1 \in H^4(BSp(4);\mathbb{Z})\cong \Z$ by pull-back of $BSp(4)\to B(Sp(4)/\mathbb{Z}_2)$. 

We will now show that the homomorphism \( \Omega_4^{\Spin\text{-}Sp(4)} \to \mathbb{Z} \oplus \mathbb{Z} \) stated in the proposition is surjective.
First, consider the following map:  
\begin{equation}
    \begin{tikzcd}
f_{\HP^1}:\HP^1 \arrow[r, hook, ""] & \HP^{\infty} = BSp(1) \arrow[r, ""] & BSp(4) \arrow[r, ""] & B(Sp(4)/\mathbb{Z}_2).
\end{tikzcd}
\end{equation}
Here, the map $BSp(1) \to BSp(4)$ is induced by the natural inclusion $Sp(1) \to Sp(4)$.  

Since $H^2(\HP^1;\mathbb{Z}_2) = 0$, $f_{\HP^1}^{\ast}x_2=w_2(\HP^1)=0$ and this provides an element $(\HP^1, f_{\HP_1}:\HP^1\to B(Sp(4)/\mathbb{Z}_2))$ of $\Omega_4^{\Spin\text{-}Sp(4)}$. As $sign(\HP^1) = 0$ and $z_4$ pulls back to the 4-dimensional generator, we have $\int_{\HP^1} f_{\HP^1}^{\ast}z_4 = 1$. Therefore, this element maps to $(0,1)$ by the homomorphism \( \Omega_4^{\Spin\text{-}Sp(4)} \to \mathbb{Z} \oplus \mathbb{Z} \).

Next, consider the following map:  
\[
\begin{tikzcd}
f_{\CP^2}:\CP^2 \arrow[r, hook, ""] & \CP^{\infty} = BSO(2) \arrow[r, ""] & BSO(3) = B(Sp(1)/\mathbb{Z}_2) \arrow[r, ""] & B(Sp(4)/\mathbb{Z}_2).
\end{tikzcd}
\]  
Here, the map $BSO(2) \to BSO(3)$ is induced by the natural inclusion $SO(2) \to SO(3)$, and the map \( B(Sp(1)/\mathbb{Z}_2) \to B(Sp(4)/\mathbb{Z}_2) \) is induced by the map \( Sp(1)/\mathbb{Z}_2 \to Sp(4)/\mathbb{Z}_2 \), which is the map sending the equivalence class of a matrix \( [A] \) to \( [A \oplus A \oplus A \oplus A] \).

\begin{equation}\label{Sp(1):fibration1}
    \begin{tikzcd}
BSp(1) \arrow[r, ""] \arrow[d, "B(\text{fourfold 
block sum})"'] & B(Sp(1)/\Z_2) \arrow[r, ""] \arrow[d, ""'] & B^2\Z_2 \arrow[d, "id"] \\
BSp(4)\arrow[r, "\lambda"] & B(Sp(4)/\Z_2) \arrow[r, "\phi"]  & B^2\Z_2
\end{tikzcd}
\end{equation}

By naturality, the pull-back of \( x_2 = \phi^{\ast}(\alpha) \) is mapped to the two-dimensional generator \( w_2 \in H^2(BSO(3); \mathbb{Z}_2)=H^2(B(Sp(1)/\Z_2); \mathbb{Z}_2)\). Therefore, \( f_{\CP^2}^{\ast}x_2 \) is the two-dimensional generator $ c_1 \in H^2(\mathbb{CP}^2; \mathbb{Z}_2) \cong \Z_2$, which coincides with \( w_2(\mathbb{CP}^2) \). Thus, this provides an element $(\CP^2, f_{\CP^2}:\CP^2 \to B(Sp(4)/\Z_2) )$ of $\Omega_4^{\text{Spin-}Sp(4)}$.

Since $sign(\CP^2) = 1$, this element is mapped to $(1, \ast)$ by the homomorphism \( \Omega_4^{\Spin\text{-}Sp(4)} \to \mathbb{Z} \oplus \mathbb{Z} \).
The elements $(0,1)$ and $(1, \ast)$ generate $\mathbb{Z} \oplus \mathbb{Z}$, so the homomorphism given in the proposition is surjective. 
By the Hopfian property of $\Z\oplus \Z$, the claim is proved.
\end{proof}
\begin{remark}
By calculating the image of \( (\mathbb{CP}^2, f_{\mathbb{CP}^2}:\mathbb{CP}^2 \to B(Sp(4)/\mathbb{Z}_2)) \) under the homomorphism \( \Omega_4^{\Spin\text{-}Sp(4)} \to \mathbb{Z} \oplus \mathbb{Z} \), we find that it is mapped to \( (1, 1) \).
\end{remark}
From the above proposition, we also see that $\HP^1$ and $\CP^2$ provide the generators of $\Omega_4^{\text{Spin-}Sp(4)}$.

\subsection{Determining the manifold generator at degree 5}

First, we consider the blue $\mathbb{Z}_2 \in \Omega_5^{\text{Spin-}Sp(4)}$ in Table \ref{tab:Sp(4)E_2}. This element is obtained by multiplying the blue $\mathbb{Z} \in \Omega_4^{\text{Spin-}Sp(4)}$ by $S^1$ which is non-zero element in $\Omega_1^{\text{Spin}}$ \cite[section 11.4]{DDHM23}. Therefore, we investigate the blue part of $\Omega_4^{\text{Spin-}Sp(4)}$ in Table \ref{tab:Sp(4)E_2}. Since this blue part corresponds to $y_4 \in H^4(B(Sp(4)/\mathbb{Z}_2);\mathbb{Z}_2)$, we need to find an element $(M, f_M:M\to B(Sp(4)/\Z_2)) \in \Omega_4^{\text{Spin-}Sp(4)}$ such that $\int_M f_M^{\ast}y_4 = 1$.  

Pull-back of $BSp(4) \to B(Sp(4)/\mathbb{Z}_2)$ maps $y_4$ to $p_1 \in H^4(BSp(4);\mathbb{Z}_2)$. From the previous subsection, we see that $(\HP^1, f_{\HP^1}:\HP^1\to B(Sp(4)/\Z_2))$ satisfies this condition. Thus, the blue $\mathbb{Z}_2 \in \Omega_5^{\text{Spin-}Sp(4)}$ corresponds to $\HP^1 \times S^1$.

Next, we consider the purple $\mathbb{Z}_2$ in Table \ref{tab:Sp(4)E_2}. This element is characterized by an element $(M, f_M:M\to B(Sp(4)/\Z_2))$ such that $\int_M f_M^{\ast}(x_2x_3) = 1$.  

Here, we analyze the Wu manifold $f_{SU(3)/SO(3)}:SU(3)/SO(3) \to BSO(3) = B(Sp(1)/\mathbb{Z}_2) \to B(Sp(4)/\mathbb{Z}_2)$. By the naturality of the fibration \eqref{Sp(1):fibration1}, $x_2$ and $x_3$ map to the 2-dimensional generator $w_2$ and the 3-dimensional generator $w_3$ of $H^{\ast}(BSO(3);\mathbb{Z}_2)$, respectively. Thus, Debray and Yu \cite[section 4.4]{DY24} say that $(SU(3)/SO(3), f_{SU(3)/SO(3)}:SU(3)/SO(3) \to B(Sp(4)/\mathbb{Z}_2)$ is  the desired object.  

From the above, we conclude that the purple $\mathbb{Z}_2 \in \Omega_5^{\text{Spin-}Sp(4)}, \Omega_5^{\text{Spin-}SU(8)}, \Omega_5^{\text{Spin-}Spin(16)}$ correspond to the Wu manifold $SU(3)/SO(3)$.

\subsection{Determining the Manifold Generator at degree 6}

One of the generators of the direct summand $\Omega_6^{\text{Spin-}Sp(4)} \cong \mathbb{Z}_2 \oplus \mathbb{Z}_2$ is given by $\HP^1 \times S^1 \times S^1$, as in the previous subsection. The other generator of the direct summand is characterized by an element $(M, f_M:M\to B(Sp(4)/\Z_2))$ such that $\int_M f_M^{\ast}(x_2y_4) = 1$.  

We show that the following map provides this generator:
\[
\begin{tikzcd}[column sep=small]
 & \CP^2 \times \CP^1 \arrow[d, "c_1 \times c_1' \times (c_1 + c_1')"']  & \\
 & B^2\mathbb{Z} \times B^2\mathbb{Z} \times B^2\mathbb{Z} = BSO(2) \times B(SO(2) \times SO(2)) \arrow[d] & \\
 & BSO(2) \times BSO(4) \arrow[d] & \\
B(Sp(2) \times Sp(2)/{\pm(I, I)}) \arrow[r, "i"] \arrow[d, "k"'] & BSO(5) \times BSO(5) = B(Sp(2)/\mathbb{Z}_2) \times B(Sp(2)/\mathbb{Z}_2) \arrow[r] & B^2\mathbb{Z}_2 \\
B(Sp(4)/\mathbb{Z}_2) & 
\end{tikzcd}
\]  
Here, $c_1$ and $c_1'$ are the generators corresponding to the first and second components of $H^2(\CP^2\times \CP^1; \Z)$, respectively. Moreover, $B(SO(2) \times SO(2)) \to BSO(4)$, $BSO(2) \to BSO(5)$, and $BSO(4) \to BSO(5)$ are induced by natural inclusions.

Here, we show that the map $\CP^2 \times \CP^1 \to BSO(5) \times BSO(5)$ admits a homotopy lift via $i$. 

\[
\begin{tikzcd}[column sep=small]
 & \CP^2 \times \CP^1 \arrow[d, "\theta"']\arrow[ld, dashed, "\exists \eta"'] \arrow[rd, "0"]  & \\
B(Sp(2) \times Sp(2)/{\pm(I, I)}) \arrow[r, "i"] \arrow[d, "k"'] & BSO(5) \times BSO(5) = B(Sp(2)/\mathbb{Z}_2) \times B(Sp(2)/\mathbb{Z}_2) \arrow[r] & B^2\mathbb{Z}_2 \\
B(Sp(4)/\mathbb{Z}_2) & 
\end{tikzcd}
\]  

Since $i^{\ast}(w_2 + w_2') = i^{\ast}((b \times b)^{\ast}(\alpha + \alpha')) = d^{\ast}(w_2 + w_2) = 0$ \eqref{Sp(2):fibration1}, it follows that the obstruction class for lifting the map $i$ is $w_2 + w_2' \in H^2(BSO(5) \times BSO(5); \mathbb{Z}_2)$.

By the pullback of $\theta$, $w_2$ maps to $c_1 \in H^2(\CP^2 \times \CP^1; \mathbb{Z}_2)$, and $w_2'$ maps to $c_1' + (c_1 + c_1') = c_1 \in H^2(\CP^2 \times \CP^1; \mathbb{Z}_2)$. Thus, the obstruction $w_2 + w_2'$ vanishes, and consequently, a homotopy lift $\eta$ exists. So we define $f_{\CP^2\times \CP^1}:\CP^2\times \CP^1\to B(Sp(4)/\Z_2)$ as $k \circ \eta$.

Since
\[
f_{\CP^2\times \CP^1}^{\ast}x_2=(k \circ \eta)^{\ast}(x_2) = (\eta^{\ast}\circ k^{\ast})(\phi^{\ast}(\alpha)) = \eta^{\ast}(d^{\ast}(\alpha)) = \eta^{\ast}(i^{\ast} w_2) = \theta^{\ast}w_2 = c_1,
\]
$w_2(\CP^2) = c_1$, and $w_2(\CP^1) = 0$, $f_{\CP^2\times \CP^1}^{\ast}x_2$ agrees with $w_2(\CP^2 \times \CP^1)$. Therefore, this defines an element $(\CP^2\times \CP^1, f_{\CP^2\times \CP^1}:\CP^2\times \CP^1\to B(Sp(4)/\Z_2))$ of $\Omega_6^{\text{Spin-}Sp(4)}$. 

From \eqref{ky_4},
\[
f_{\CP^2\times \CP^1}^{\ast}(y_4) = \eta^{\ast}(k^{\ast}(y_4)) = \eta^{\ast}(i^{\ast}(w_4 + w_4')) = \theta^{\ast} w_4+\theta^{\ast} w_4'=0 + c_1'(c_1 + c_1') = c_1c_1',
\]
\[
\int_{\CP^2 \times \CP^1} f_{\CP^2\times \CP^1}^{\ast}(x_2y_4) = \int_{\CP^2 \times \CP^1} c_1 \cdot c_1c_1' = \int_{\CP^2 \times \CP^1} c_1^2c_1' = 1,
\]
this satisfies the desired property.

This element also survives in $\Omega_6^{\text{Spin-}SU(8)}$ and $\Omega_6^{\text{Spin-}Spin(16)}$. Therefore, it remains to determine the generators of $\Omega_6^{\text{Spin-}SU(8)} \cong \mathbb{Z} \oplus \mathbb{Z}_2$.

\begin{lemma}
\[
(M, f_M:M\to B(SU(8)/\Z_2)) \mapsto \left( \frac{1}{2} \int_M f_M^{\ast}z_6, \int_M f_M^{\ast}(x_2y_4) \right)
\]
gives the isomorphism \( \Omega_6^{\Spin\text{-}SU(8)} \cong \mathbb{Z} \oplus \mathbb{Z}_2 \), where $z_6 \in H^6(B(SU(8)/\mathbb{Z}_2);\mathbb{Z}) \cong \mathbb{Z} \oplus \mathbb{Z}_2$ is the torsion-free generator.
\end{lemma}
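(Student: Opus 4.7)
The plan is to mirror the strategy of the degree-4 proposition proved earlier. First, I would determine $H^6(B(SU(8)/\mathbb{Z}_2); \mathbb{Z})$ explicitly and pin down the free generator $z_6$. Running the integral Leray--Serre spectral sequence for the fibration $BSU(8) \to B(SU(8)/\mathbb{Z}_2) \to B^2\mathbb{Z}_2$, combined with a Bockstein analysis applied to the mod-$2$ cohomology from Theorem \ref{thm:cohomology ring} (in the same spirit as the Bockstein argument in the degree-4 lemma), one confirms $H^6(B(SU(8)/\mathbb{Z}_2); \mathbb{Z}) \cong \mathbb{Z} \oplus \mathbb{Z}_2$ and identifies $z_6$ by its pullback to $H^6(BSU(8); \mathbb{Z}) = \mathbb{Z}\langle c_3 \rangle$. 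The key point is that this pullback is an even multiple of $c_3$ (expected: $\pm 2c_3$), which is the origin of the factor $\tfrac{1}{2}$ appearing in the statement.

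Second, I would verify well-definedness of the map. The second coordinate is manifestly a well-defined element of $\mathbb{Z}_2$. For the first, I need $\int_M f_M^\ast z_6 \in 2\mathbb{Z}$. When $f_M$ lifts through $BSU(8)$ this is immediate from $z_6 \mapsto 2 c_3$, and for a general $\Spin\text{-}SU(8)$ manifold the same divisibility is forced by the central $\mathbb{Z}_2$-cover $BSU(8) \to B(SU(8)/\mathbb{Z}_2)$, which can be read off from the Bockstein bookkeeping of step 1. Thus $\frac{1}{2}\int_M f_M^\ast z_6$ is a well-defined integer.

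Third, I would establish surjectivity by evaluating on two explicit bordism classes. The class $(\CP^2 \times \CP^1, f_{\CP^2 \times \CP^1})$ already built in the previous subsection satisfies $\int f^\ast(x_2 y_4) = 1$, so it maps to $(a, 1)$ for some $a \in \mathbb{Z}$. To produce an element hitting the $\mathbb{Z}$-summand, I would construct a map $g \colon \CP^1 \times \CP^1 \times \CP^1 \to B(SU(8)/\mathbb{Z}_2)$ from the three tautological line bundles $L_1, L_2, L_3$ on the factors, assembled into a rank-$8$ unitary bundle whose structure group actually reduces to $SU(8)/\mathbb{Z}_2$ (and not merely to $SU(8)$), arranged so that $g^\ast x_2 = 0 = w_2(\CP^1 \times \CP^1 \times \CP^1)$ and $\int g^\ast z_6 = \pm 2$. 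Together with the previous class, this gives a surjection onto $\mathbb{Z} \oplus \mathbb{Z}_2$.

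Since $\Omega_6^{\Spin\text{-}SU(8)} \cong \mathbb{Z} \oplus \mathbb{Z}_2$ by Theorem \ref{thm:twistedspinbordism} and $\mathbb{Z} \oplus \mathbb{Z}_2$ is Hopfian, any surjective endomorphism is an isomorphism, which concludes. I expect the hard part to lie in two places: the Bockstein/spectral-sequence computation pinning down the precise multiple of $c_3$ represented by $z_6$, and the explicit construction of the bundle on $\CP^1 \times \CP^1 \times \CP^1$ with $c_1 = 0$, nonzero $c_3$, and a genuine $SU(8)/\mathbb{Z}_2$-structure; naive direct-sum or dual-pair constructions force $c_3 = 0$ for parity reasons, so a more subtle assembly (for instance using a rank-$2$ twist or an associated bundle of a symplectic/orthogonal representation exploiting the $\mathbb{Z}_2$ quotient) will be required.
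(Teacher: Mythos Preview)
Your overall architecture (identify $z_6$, prove the first coordinate is integer-valued, exhibit two surjecting classes, invoke Hopfianity) matches the paper, but the mechanism you propose for the crucial divisibility-by-$2$ step is wrong and would not go through.

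You expect the pullback of $z_6$ along $BSU(8)\to B(SU(8)/\mathbb{Z}_2)$ to be $\pm 2c_3$, and you cite this as the source of the $\tfrac{1}{2}$. In fact the integral Leray--Serre spectral sequence collapses in this range and the edge map sends $z_6$ to $c_3$, not $2c_3$. So your argument ``when $f_M$ lifts through $BSU(8)$ this is immediate from $z_6\mapsto 2c_3$'' fails, and the vague ``Bockstein bookkeeping'' you invoke for the general case has nothing to grab onto. The paper's actual argument is different: one first pins down the mod-$2$ reduction of $z_6$ as $y_6+x_2y_4$ (the ambiguity is resolved by requiring $Sq^1$ to vanish), and then uses a Wu-class identity. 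Since $y_6=Sq^2 y_4$, for any $\Spin\text{-}SU(8)$ $6$-manifold one has
\[
\int_M f_M^\ast y_6=\int_M Sq^2(f_M^\ast y_4)=\int_M v_2(M)\,f_M^\ast y_4=\int_M f_M^\ast(x_2y_4),
\]
using $v_2(M)=w_2(M)=f_M^\ast x_2$. Hence $\int_M f_M^\ast(y_6+x_2y_4)=0$ always, and $\int_M f_M^\ast z_6$ is even. This Wu-class step is the genuine content; it is not visible from the spectral sequence or Bockstein alone.

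Your step 3 is also off target. There is no need for the $(\CP^1)^3$ bundle to avoid lifting to $SU(8)$; the paper's construction \emph{does} lift. One takes three line bundles with first Chern classes $c_1$, $c_1'+c_1''$, $-(c_1+c_1'+c_1'')$, summing to zero so that the resulting $U(3)\subset U(8)$ bundle lifts to $SU(8)$. Then $f^\ast z_6=c_3=-2c_1c_1'c_1''$, giving $\tfrac{1}{2}\int=-1$. The factor of $2$ you were hunting for appears here, in this particular evaluation, not in the pullback formula for $z_6$; your ``parity obstruction'' to a naive direct-sum construction does not exist.
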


\begin{proof}
First, we show that this defines a homomorphism. For this, it suffices to show that $\int_M f_M^\ast z_6$ is always even, which is verified by computing the mod 2 reduction of $z_6$.

Consider the following fibration:
\[
\begin{tikzcd}
BSU(8) \arrow[r, ""] & B(SU(8)/\mathbb{Z}_2) \arrow[r, ""] & B^2\mathbb{Z}_2.
\end{tikzcd}
\]
The Leray-Serre spectral sequence for both $\mathbb{Z}$ and $\mathbb{Z}_2$ coefficients appears as follows:

\begin{equation}\label{BSU(8) Z coefficient}
    \begin{array}{c|cccccccc}
6 & c_3 &  &  & & & & & \\
5 & 0 & 0 & 0 &  &  &  &  & \\
4 & c_2 & 0 & 0 & \mu c_2 &  &  &  & \\
3 & 0 & 0 & 0 & 0 & 0 &  & & \\
2 & 0 & 0 & 0 & 0 & 0 & 0 &  & \\
1 & 0 & 0 & 0 & 0 & 0 & 0 & 0 & \\
0 & 1 & 0 & 0 & \mu & 0 & \nu & \mu^2 & \xi \\ \hline
  & 0 & 1 & 2 & 3 & 4 & 5 & 6 & 7\\
\end{array}
\end{equation}
\begin{equation}\label{BSU(8) Z2 coefficient}
    \begin{array}{c|cccccccc}
6 & c_3 & &  &  &  &  & &  \\
5 & 0 & 0 & 0 &  &  &  & & \\
4 & c_2 & 0 & \alpha c_2 & \beta c_2 &  &  &  & \\
3 & 0 & 0 & 0 & 0 & 0 &  &  & \\
2 & 0 & 0 & 0 & 0 & 0 & 0 &  & \\
1 & 0 & 0 & 0 & 0 & 0 & 0 & 0 & \\
0 & 1 & 0 & \alpha & \beta & \alpha^2 & \alpha \beta,\gamma & \alpha^3, \beta^2 & \alpha^2\beta, \alpha \gamma \\ \hline
  & 0 & 1 & 2 & 3 & 4 & 5 & 6 & 7\\
\end{array}
\end{equation}

As mentioned at theorem \ref{thm:cohomology ring}, the $\mathbb{Z}_2$-coefficient Leray-Serre spectral sequence has no nontrivial differential maps in dimensions up to 8, so it collapses at the $E_2$ page in this range. Furthermore, by following the Bockstein long exact sequence, we find that the mod 2 reductions of $\mu$ and $\xi$ are $\beta$ and $\alpha^2 \beta = Sq^1(\alpha^3)$, respectively.

Since $\beta$ and $\alpha^2 \beta$ survive to the $E_\infty$ page of \eqref{BSU(8) Z2 coefficient}, $\mu$ and $\xi$ also survive to the $E_\infty$ page of \eqref{BSU(8) Z coefficient}. Therefore, the only possible nontrivial differential map is $d_4(c_2)$, which, as in the case of $Sp(4)$, is also zero (though we do not rely on this fact here).

From this, we conclude that \( H^6(B(SU(8)/\mathbb{Z}_2); \mathbb{Z}) \cong \mathbb{Z} \oplus \mathbb{Z}_2 \), and the mod 2 reduction of the generator of the free part is given by \( y_6 + \epsilon_1 x_2 y_4 + \epsilon_2 x_3^2 + \epsilon_3 x_2^3 \), where $\epsilon_1, \epsilon_2, \epsilon_3$ are either $0$ or $1$. By considering the Bockstein long exact sequence, we see that any element obtained via mod 2 reduction must vanish under $Sq^1$. 

Since
\[
Sq^1(y_6 + \epsilon_1 x_2 y_4 + \epsilon_2 x_3^2 + \epsilon_3 x_2^3) = (1 + \epsilon_1)x_3 y_4 + \epsilon_3 x_2^2 x_3,
\]
we deduce that $\epsilon_1 = 1$ and $\epsilon_3 = 0$. By appropriately adding $\mu^2$, we obtain a generator of the free part of \( H^6(B(SU(8)/\mathbb{Z}_2); \mathbb{Z}) \cong \mathbb{Z} \oplus \mathbb{Z}_2 \) whose mod 2 reduction is \( y_6 + x_2 y_4 \), and we denote this generator by $z_6$.

To show that $\int_M f_M^\ast z_6$ is always even, it suffices to prove that there does not exist \( (M, f_M:M\to B(SU(8)/\Z_2)) \in \Omega_6^{\text{Spin-}SU(8)} \) such that 
\[
\int_M f_M^{\ast}(y_6 + x_2y_4) = 1.
\]

Using the properties of the Wu class, we have
\[
\int_M f_M^{\ast}y_6 = \int_M Sq^2(f_M^{\ast}y_4) = \int_M v_2(M)f_M^{\ast}y_4.
\]

Since \( v_2(M) = w_2(M) + w_1(M)^2 = w_2(M) = f_M^{\ast}x_2 \), it follows that
\[
\int_M f_M^{\ast}y_6 = \int_M f_M^{\ast}(x_2y_4).
\]

Thus, we always have
\[
\int_M f_M^{\ast}(y_6 + x_2y_4) = 0,
\]
proving the claim.

Next, we show that the homomorphism \( \Omega_6^{\Spin\text{-}SU(8)} \to \mathbb{Z} \oplus \mathbb{Z}_2 \) is surjective. Consider the following diagram:
\[
\begin{tikzcd}[column sep=small]
 & \mathbb{CP}^1 \times \mathbb{CP}^1 \times \mathbb{CP}^1 \arrow[d, "c_1 \times (c_1'+c_1'') \times \{-(c_1 + c_1'+c_1'')\}"']  & \\
 & B^2\mathbb{Z} \times B^2\mathbb{Z} \times B^2\mathbb{Z} = B(U(1) \times U(1) \times U(1)) \arrow[d] & \\
 & BU(3) \arrow[d] & \\
BSU(8) \arrow[r, ""] \arrow[d, ""'] & BU(8) \arrow[r] & B^2\mathbb{Z} \\
B(SU(8)/\mathbb{Z}_2) & 
\end{tikzcd}
\]

Here, \( c_1 \), \( c_1' \), and \( c_1'' \in H^2(\mathbb{CP}^1 \times \mathbb{CP}^1 \times \mathbb{CP}^1 ; \Z) \) denote the generators of the first, second, and third components, respectively. Moreover, $B(U(1) \times U(1) \times U(1)) \to BU(3)$ and $BU(3) \to BU(8)$ are induced by natural inclusion maps.

\[
\begin{tikzcd}[column sep=small]
 & \mathbb{CP}^1 \times \mathbb{CP}^1 \times \mathbb{CP}^1 \arrow[d, "\kappa"'] \arrow[ld, dashed, "\exists \iota"'] \arrow[rd, "0"]  & \\
BSU(8) \arrow[r, ""] \arrow[d, ""'] & BU(8) \arrow[r, ""] & B^2\mathbb{Z} \\
B(SU(8)/\mathbb{Z}_2) & 
\end{tikzcd}
\]

The obstruction class of \( BSU(8) \to BU(8) \) is \( c_1 \), and \( c_1 \) maps to 
\[
c_1 + (c_1' + c_1'') + \{ -(c_1 + c_1' + c_1'') \} = 0
\]
by the pullback of $\kappa$, which ensures the existence of a homotopy lift $\iota$.

By the pullback of \( BSU(8) \to B(SU(8)/\mathbb{Z}_2) \), \( x_2 \) maps to \( 0 \), and \( z_6 \) maps to \( c_3 \). Hence, we have 
\[
f_{\CP^1\times \CP^1\times \CP^1}^{\ast}x_2 = 0,  
\]
\[
f_{\CP^1\times \CP^1\times \CP^1}^{\ast}z_6 = \iota^{\ast}c_3 = \kappa^{\ast}c_3 = c_1 \times (c_1' + c_1'') \times \{-(c_1 + c_1' + c_1'')\} = -2c_1c_1'c_1''.
\]
Since \( w_2(\mathbb{CP}^1 \times \mathbb{CP}^1 \times \mathbb{CP}^1) = 0 \), the element $(\mathbb{CP}^1 \times \mathbb{CP}^1 \times \mathbb{CP}^1, f_{\CP^1\times \CP^1\times \CP^1}:\mathbb{CP}^1 \times \mathbb{CP}^1 \times \mathbb{CP}^1\to B(SU(8)/\Z_2))$ defines a class in \( \Omega_6^{\text{Spin-}SU(8)} \), and maps to \( (-1, 0) \) under the given homomorphism. As the previously considered \( \mathbb{CP}^2 \times \mathbb{CP}^1 \) maps to the 2-torsion element \( (0, 1) \), which establishes surjectivity.
By the Hopfian property of $\Z\oplus \mathbb{Z}_2$, the claim is proven.
\end{proof}

In conclusion, the generators of \( \Omega_6^{\text{Spin-}SU(8)} \cong \mathbb{Z} \oplus \mathbb{Z}_2 \) are given by \( \mathbb{CP}^1 \times \mathbb{CP}^1 \times \mathbb{CP}^1 \) and \( \mathbb{CP}^2 \times \mathbb{CP}^1 \).

\bibliography{spinbor}
\bibliographystyle{plain}

\end{document}